\newcommand{\Z}{\mathbb{Z}}
\newcommand{\M}{\mathcal{M}}
\newcommand{\Hy}{\mathcal{H}}
\newcommand{\A}{\mathcal{A}}
\newcommand{\la}{\langle}
\newcommand{\ra}{\rangle}
\newcommand{\1}{a_1}
\newcommand{\2}{a_2}
\newcommand{\3}{a_3}
\newcommand{\4}{a_4}
\newcommand{\5}{a_5}
\newcommand{\6}{b_1}
\newcommand{\7}{b_2}
\newcommand{\8}{b_3}
\newcommand{\9}{b_4}
\newcommand{\0}{b_5}
\newtheorem{thm}{Theorem}[section]
\newtheorem{prop}[thm]{Proposition}
\address{Department of Mathematics \endgraf 
Faculty of Science and Technology \endgraf 
Tokyo University of Science \endgraf 
Noda, Chiba, 278-8510, Japan \endgraf
\it{e-mail address : \rm{kobayashi\_ryoma@ma.noda.tus.ac.jp}}}
\begin{document}

\title{On genera of Lefschetz fibrations and finitely presented groups}
\author{Ryoma Kobayashi}
\thanks{2010 {\it Mathematics Subject Classification.}
        Primary 57N13; Secondary 57M05}
\thanks{{\it Key words and phrases.}
        Lefschetz fibration, fundamental group}
\maketitle

\begin{abstract}
It is known that every finitely presented group is the fundamental group
 of the total space of a Lefschetz fibration. 
In this paper, we give another proof which improves the result of
 Korkmaz. 
In addition, Korkmaz defined the genus of a finitely presented group. 
We also evaluate upper bounds for genera of some finitely
 presented groups.
\end{abstract}

\section{Introduction}

Gompf \cite{g} proved that every finitly presented group is the
fundamental group of a closed symplectic $4$-manifold.
Donaldson \cite{d} proved that every closed
symplectic $4$-manifold admits a Lefschetz pencil.
By blowing up the base locus of a Lefschetz pencil, we obtain a Lefschetz
fibration over $S^2$.
In addition, blowing up does not change the fundamental group of a
$4$-manifold.
Therefore, it immediately follows that every finitely presented group
is the fundamental group of the total space of a Lefschetz fibration. 

Amoros-Bogomolov-Katzarkov-Pantev \cite{a} and Korkmaz \cite{k2}
also constructed Lefschetz fibrations whose fundamental groups are a
given finitely presented group.
In particular, Korkmaz \cite{k2} provided explicitly a genus and a
monodromy of such a Lefschetz fibration.

Let $F_n=\la{}g_1,\dots,g_n\ra$ be the free group of rank $n$.
For $x\in{}F_n$, the {\it syllable length} $\ell(x)$ of $x$ is defined
by
$$\ell(x)=\min\{s\mid{}x=g_{i(1)}^{m(1)}\cdots{}g_{i(s)}^{m(s)}\}.$$
For a finitely presented group $\Gamma$ with a presentation 
$\Gamma=\la{}g_1,\dots,g_n\mid{}r_1,\dots,r_k\ra$, Korkmaz \cite{k2}
proved that for any
$\displaystyle{}g\geq2(n+\sum_{1\leq{}i\leq{}k}\ell(r_i)-k)$ there
exists a genus-$g$ Lefschetz fibration $f:X\to{}S^2$ such that the
fundamental group $\pi_1(X)$ is isomorphic to $\Gamma$, providing
explicitly a monodromy.

In this paper, we improve this result. 

\begin{thm}\label{1.1}
Let $\Gamma$ be a finitely presented group with a presentation 
$\Gamma=\la{}g_1,\dots,g_n\mid{}r_1,\dots,r_k\ra$, and let 
$\displaystyle\ell=\max_{1\leq{}i\leq{}k}\{\ell(r_i)\}$.  
Then for any $g\geq2n+\ell-1$, there exists a genus-$g$ Lefschetz
 fibration $f:X\to{}S^2$ such that the fundamental group $\pi_1(X)$ is
 isomorphic to $\Gamma$.
\end{thm}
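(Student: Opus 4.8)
The plan is to reduce the statement to a purely mapping-class-group question. Recall that a genus-$g$ Lefschetz fibration $f:X\to S^2$ is encoded by a factorization $t_{c_1}t_{c_2}\cdots t_{c_m}=1$ of the identity in the mapping class group $\mathrm{Mod}(\Sigma_g)$ into right-handed Dehn twists about simple closed curves $c_1,\dots,c_m$ (the vanishing cycles), and that, since $S^2$ is simply connected and each critical point attaches a $2$-handle along its vanishing cycle, one has
$$\pi_1(X)\cong\pi_1(\Sigma_g)\big/\langle\langle c_1,\dots,c_m\rangle\rangle,$$
the quotient of the surface group by the normal closure of the vanishing cycles. Thus it suffices, for every $g\ge 2n+\ell-1$, to exhibit a \emph{positive} factorization of the identity in $\mathrm{Mod}(\Sigma_g)$ whose vanishing cycles normally generate a subgroup $N$ with $\pi_1(\Sigma_g)/N\cong\Gamma$.

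First I would fix the standard generators $a_1,b_1,\dots,a_g,b_g$ of $\pi_1(\Sigma_g)$, subject only to $\prod_{i=1}^{g}[a_i,b_i]=1$, and realize each as an explicit simple closed curve. The target presentation is reached by identifying the generator $g_i$ of $\Gamma$ with $a_i$ for $1\le i\le n$, and arranging the vanishing cycles so that their normal closure kills every $b_i$, kills the unused generators $a_{n+1},\dots,a_g$, and imposes each relator $r_j$. Killing all the $b_i$ makes the surface relation $\prod[a_i,b_i]$ collapse automatically, so no spurious relation survives and the quotient is exactly $\langle a_1,\dots,a_n\mid r_1,\dots,r_k\rangle\cong\Gamma$. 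Each relator $r_j$, of syllable length at most $\ell$, would be realized by a simple closed curve $\gamma_j$ that reads off the word $r_j$ through the handles carrying $a_1,\dots,a_n$, made embedded by means of auxiliary genus-$(\ell-1)$ handles. The heart of the argument is then to assemble these curves into a genuine positive factorization of the identity, which I would build from standard relations in $\mathrm{Mod}(\Sigma_g)$ — the chain and lantern relations together with monodromy substitution — so that introducing a right-handed twist about a prescribed curve is compensated by further positive twists while the total product stays trivial. I expect the genus to split as $2n+(\ell-1)$: a block of genus $2n$ hosting the $n$ surviving generators and the auxiliary curves needed to cancel the twists about the $b_i$ and the unused $a_i$ positively, and a reusable block of genus $\ell-1$ accommodating the relator curves. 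The decisive economy — and the source of the improvement over Korkmaz's bound $2\bigl(n+\sum_i\ell(r_i)-k\bigr)$ — is that the relators are imposed one at a time inside the \emph{same} genus-$(\ell-1)$ subsurface, which need only be large enough for the longest relator; this replaces the sum $\sum_i\ell(r_i)$ by the maximum $\ell=\max_i\ell(r_i)$.

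The main obstacle, I expect, is precisely this simultaneous bookkeeping: guaranteeing that every twist in the assembled word is right-handed and that the whole product collapses to the identity, while at the same time (i) each relator is realized as an \emph{embedded} simple closed curve in the shared subsurface, (ii) the generators $a_1,\dots,a_n$ genuinely survive in the quotient with no relation imposed beyond $r_1,\dots,r_k$, and (iii) the genus is held to $2n+\ell-1$. Verifying positivity and triviality of the resulting factorization, and confirming that reusing one subsurface for all $k$ relators leaves the $a_i$ unconstrained, is the step demanding the careful explicit choice of curves and substitutions.
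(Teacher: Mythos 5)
There is a genuine gap, and it is exactly the step you defer to the end. Your reduction of the problem --- find a positive factorization of $1$ in $\mathrm{Mod}(\Sigma_g)$ whose vanishing cycles normally generate the kernel of $\pi_1(\Sigma_g)\twoheadrightarrow\Gamma$ --- is correct in outline (though the formula $\pi_1(X)\cong\pi_1(\Sigma_g)/\langle\langle c_1,\dots,c_m\rangle\rangle$ requires the fibration to admit a section, a hypothesis you omit and must arrange), and your genus accounting ($2n$ for the generator block, $\ell-1$ reused for all relators) matches the paper's. But the mechanism you propose for producing the factorization --- chain and lantern relations plus monodromy substitution --- is not developed, is not what makes the construction work, and gives no control over the quotient: every substitution introduces its own vanishing cycles, and nothing in your sketch prevents these from killing additional elements of $\pi_1(\Sigma_g)$ and collapsing the quotient below $\Gamma$. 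You correctly identify this as ``the main obstacle,'' which is to say the proof is not given.

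The missing idea is the conjugate-and-append device. One starts from Korkmaz's explicit hyperelliptic relation $W=1$, whose Lefschetz fibration has a section, and repeatedly replaces a factorization $V=1$ by $VV^{t_d}=1$ for a suitable simple closed curve $d$. This is \emph{automatically} a positive factorization of the identity (since $V=1$ forces $V^{t_d}=1$), so positivity never has to be checked; and Proposition~\ref{2.2} states that if $d$ meets some existing vanishing cycle transversely in a single point, then the new total space has fundamental group equal to the old quotient with $d$ additionally killed --- no more and no less. Applying this first with $d=b_1,\dots,b_g$, then $d=a_{n+1},\dots,a_{[\frac{g}{2}]}$, and finally with $d=R_1,\dots,R_k$ (embedded curves realizing the relators inside the shared genus-$(\ell-1)$ block) yields the theorem. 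Two consequences of this device are invisible in your sketch: each relator curve $R_i$ must be constructed so as to intersect an existing vanishing cycle exactly once (this constrains the explicit curves, and an extra argument is needed to discard the relations coming from the other vanishing cycles of $V^{t_{R_i}}$); and the generator block must have genus $2n$, not $n$, because the surviving presentation of $\pi_1(X_U)$ carries the relations $a_ia_{g+1-i}=1$ pairing the handles, so only $[\frac{g}{2}]$ free generators remain. Without this device or an equivalent one, neither positivity nor the exactness of the quotient is established.
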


In this theorem,
if $k=0$, we suppose $\ell=1$.
We will prove the theorem by providing an explicit monodromy.

In addition, Korkmaz \cite{k2} defined the {\it genus} $g(\Gamma)$ of a
finitely presented group $\Gamma$ to be the minimal genus of a Lefschetz
fibration whose fundamental group is isomorphic to $\Gamma$. 
It immediately follows from the above theorem that the definition of the
genus of a finitely presented group is well-defined.

We will also prove the following theorem.

\begin{thm}\label{1.2}
\begin{enumerate}
 \item Let $B_n$ denote the $n$-strands braid group.
       Then for $n\geq3$, we have $2\leq{}g(B_n)\leq4$ and $g(B_2)=1$.
 \item Let $\Hy_g$ be the hyperelliptic mapping class group of a closed
       connected orientable surface of genus $g\geq1$.
       Then we have $2\leq{}g(\Hy_g)\leq4$.
 \item Let $\M_{0,n}$ denote the mapping class group of a sphere with
       $n$ punctures.
       Then for $n\geq3$, we have $2\leq{}g(\M_{0,n})\leq4$ and
       $g(\M_{0,2})=2$.
 \item Let $S_n$ denote the $n$-symmetric group.
       Then for $n\geq3$, we have $2\leq{}g(S_n)\leq4$ and $g(S_2)=2$.
 \item Let $\A_n$ denote the $n$-Artin group associated to the Dynkin
       diagram shown in Figure~\ref{artin}. 
       Then for $n\geq5$, we have $2\leq{}g(\A_n)\leq5$.
 \item Let $n,k\geq0$ be integers with $n+k\geq3$, and let
       $m_1,\dots,m_k\geq2$ be integers.
       Then we have
       $\frac{n+k+1}{2}\leq{}g(\Z^n\oplus\Z_{m_1}\oplus\cdots\oplus\Z_{m_k})\leq{}n+k+1$.
\end{enumerate}
\end{thm}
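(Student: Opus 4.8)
My plan is to separate the lower and upper bounds, using throughout that for a genus-$g$ Lefschetz fibration $f:X\to S^2$ with regular fiber $\Sigma_g$ and vanishing cycles $c_1,\dots,c_m$ one has $\pi_1(X)\cong\pi_1(\Sigma_g)/\la\la c_1,\dots,c_m\ra\ra$, the quotient by the normal closure. The upper bounds I would obtain by writing down explicit monodromy factorizations. It is worth noting at the outset that Theorem~\ref{1.1} alone will not suffice for (1)--(5): any nonabelian $2$-generator presentation has $\ell\geq2$, and the natural $2$-generator relators of $B_n$, $S_n$ and $\M_{0,n}$ have syllable length growing with $n$, so that theorem cannot produce a bound independent of $n$. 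The content of the theorem is therefore carried by the sharper lower bounds together with tailored constructions.

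For the lower bounds I would use two facts about small fiber genus. A genus-$0$ fibration has $\pi_1(X)=1$, and a genus-$1$ fibration has $\pi_1(X)$ a quotient of $\Z^2$, hence abelian; since $B_n\,(n\geq3)$, $\Hy_g$, $\M_{0,n}\,(n\geq3)$, $S_n\,(n\geq3)$ and $\A_n$ are all nonabelian, each satisfies $g\geq2$. For (6), when $g\geq2$ the fibration is nontrivial, and if $\pi_1(X)$ is to be abelian it must contain a nonseparating vanishing cycle, since a fibration whose vanishing cycles are all separating leaves $\pi_1(X)$ nonabelian for $g\geq2$. Such a cycle is primitive in $H_1(\Sigma_g)=\Z^{2g}$, so $H_1(X)=\pi_1(X)$ is a quotient of $\Z^{2g-1}$ and needs at most $2g-1$ generators; since $\Z^n\oplus\Z_{m_1}\oplus\cdots\oplus\Z_{m_k}$ needs $n+k$, we get $n+k\leq2g-1$, i.e. $g\geq\frac{n+k+1}{2}$. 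For the two exact values I would invoke the known description of fundamental groups of genus-$1$ fibrations, which are torsion-free: this realizes $\Z=B_2$ in genus $1$, giving $g(B_2)=1$, while $\Z_2=S_2=\M_{0,2}$ has torsion and hence forces $g\geq2$, giving $g(S_2)=g(\M_{0,2})=2$.

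For the upper bound in (6) I would construct, by hand, a genus-$(n+k+1)$ fibration whose vanishing cycles abelianize $\pi_1(\Sigma_g)$ and, for each torsion summand, realize a relation $y_j^{m_j}=1$ by a short configuration of curves, the whole family arranged so that the product of the corresponding right Dehn twists is isotopic to the identity. This refines the construction behind Theorem~\ref{1.1}, spending roughly one handle per cyclic summand and so roughly halving the coefficient of $n+k$ relative to the general estimate $2(n+k)+3$ that theorem yields for this presentation; I expect this step to be essentially routine once the factorization is written out.

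The heart of the theorem, and the step I expect to be the main obstacle, is the uniform upper bound $4$ in (1)--(4) and $5$ in (5). These groups are all $2$-generated and mutually related ($S_n$ is a quotient of $B_n$, $\M_{0,n}$ is closely related to $B_n$, and $\Hy_g$ is a central extension of $\M_{0,2g+2}$), so my strategy is to fix a genus-$4$ fiber, realize the two generators on two of its handles, and impose the defining relations through a monodromy that exploits the cyclic symmetry $\sigma_{i+1}=c\sigma_ic^{-1}$ of these presentations, so that a single ``rotation'' curve carries the relations involving all of the $\sigma_i$ without introducing new handles; the further relations needed to pass from $B_n$ to $S_n$, $\M_{0,n}$ or $\Hy_g$ (such as $\sigma_i^2=1$) are then imposed by a bounded number of additional vanishing cycles on the same fiber. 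The difficulty is precisely to keep the genus independent of $n$: because the defining relations have unbounded syllable length they cannot be encoded one curve per syllable as in Theorem~\ref{1.1}, and must instead be forced by the global triviality of the monodromy together with the symmetry. For $\A_n$ the same scheme requires one extra handle, which accounts for the bound $5$.
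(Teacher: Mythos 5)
Your lower bounds for (1)--(5) are fine and essentially match the paper's (which invokes Korkmaz's inequality $\frac{m(\Gamma)}{2}\leq g(\Gamma)$, with equality only for surface groups), but the upper bounds --- which you correctly identify as the heart of the theorem --- rest on a mechanism that does not work and is not the one the paper uses. You assert that relators of unbounded syllable length ``cannot be encoded one curve per syllable'' and must instead be ``forced by the global triviality of the monodromy together with the symmetry'' via a single rotation curve. There is no such mechanism: in the constructions available here (Proposition~\ref{2.2}), each conjugated block $V^{t_R}$ contributes exactly the normal closure of the single element $R\in\pi_1(\Sigma_g)$ to the kernel, so one curve imposes one relator, and the global triviality of the monodromy word imposes nothing beyond that. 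The idea you are missing is that a relator of arbitrarily large syllable length in the two generators $x,y$ \emph{can} be realized by a single embedded curve on the fixed surface $\Sigma_4$. The paper first forms $U=WW^{t_{b_1}}\cdots W^{t_{b_4}}$, so that by (1) of Proposition~\ref{3.1} the group $\pi_1(X_U)$ is free of rank $2$ on $a_1,a_2$ with $b_i=1$, $a_4=a_1^{-1}$ and $a_3=a_2^{-1}$; a relator such as $xy^kxy^{-k}x^{-1}y^kx^{-1}y^{-k}$ is then drawn as one embedded curve $R_{1,k}$ that spells out the word by alternating between the handles carrying $a_1,a_2$ and those carrying $a_3=a_2^{-1}$, $a_4=a_1^{-1}$ (explicitly $R_{1,k}=a_3^{-1}a_4^{-k}(b_3b_4)^{-1}a_2a_1^{-k}b_1a_2^{-1}(b_1b_2)^{-1}a_1^{k}a_2^{-1}(b_3b_4)a_4^{k}$, which maps to the desired relator in $\pi_1(X_U)$). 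One such curve is added per relator, so the number of singular fibers grows with $n$ while the genus stays at $4$ (at $5$ for $\A_n$, where (2) of Proposition~\ref{3.1} is used instead), and Proposition~\ref{2.2} is applied once per curve. Without this idea the bounds $g\leq4$ and $g\leq5$ are simply not established.

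Two secondary problems. First, your derivation of $g(B_2)=1$ and $g(S_2)=g(\M_{0,2})=2$ from the claim that fundamental groups of genus-$1$ Lefschetz fibrations are torsion-free is wrong: $L(n,1)\times S^1$ is a genus-$1$ Lefschetz fibration (a $T^2$-bundle over $S^2$) with fundamental group $\Z\oplus\Z_n$, which has torsion. The correct statement, proved in the paper's appendix (Theorem~\ref{5.1}), is that the genus-$1$ groups are exactly $1$, $\Z\oplus\Z$, $\Z$ and $\Z\oplus\Z_n$; your conclusions survive because $\Z_2$ is not on this list, but not for the reason you give. Second, your lower bound in (6) leans on the unproved assertion that a genus-$g\geq2$ fibration with abelian fundamental group must possess a nonseparating vanishing cycle; the paper avoids this entirely by quoting (2) of Theorem~\ref{2.3}, which already forces $2g\geq n+k+1$ since $\Z^n\oplus\Z_{m_1}\oplus\cdots\oplus\Z_{m_k}$ is not a surface group.
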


\begin{figure}[h]
\includegraphics[scale=0.5]{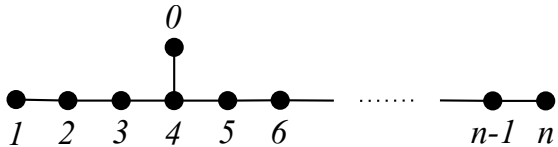}
\caption{The Dynkin diagram.}\label{artin}
\end{figure}

\section{A Lefschetz fibration and preliminaries}

\subsection{A Lefschetz fibration and its monodromy}

Here, we review briefly the theory of Lefschetz
fibrations.

Let $X$ be a closed connected orientable smooth $4$-manifold.
A smooth map $f:X\to{}S^2$ is a genus-$g$ {\it Lefschetz fibration} over
$S^2$ if it satisfies following properties:
\begin{itemize}
 \item All regular fibers are diffeomorphic to a closed connected
       oriented surface of genus $g$.
 \item Each critical point of $f$ has an orientation-preserving chart on
       which $f(z_1,z_2)=z_1^2+z_2^2$ relative to a suitable smooth
       chart on $S^2$.
 \item Each singular fiber contains only one critical point.
 \item $f$ is {\it relatively minimal}, that is, no fiber contains an
       embedded sphere with the self-intersection number $-1$.
\end{itemize} 

Let $\M_g$ be the mapping class group of a closed connected oriented
surface $\Sigma_g$ of genus $g$, that is, the group of isotopy classes
of orientation-preserving diffeomorphisms $\Sigma_g\to\Sigma_g$.
In this paper, for elements $x$ and $y$ of a group, the composition $xy$
means that we first apply $x$ and then $y$.
So for $f,g\in\M_g$, the composition $fg$ means that we first apply $f$
and then $g$.
For a simple closed curve $c$ on $\Sigma_g$, let $t_c$ be the isotopy
class of the right Dehn twist about $c$ (see Figure~\ref{dehn}).
For a genus-$g$ Lefschetz fibration which has $n$ singular fibers, there
are simple closed curves $c_1,\dots,c_n$ on $\Sigma_g$, each of which is
called the {\it vanishing cycle}, such that each singular fiber $F_i$ is
diffeomorphic to $\Sigma_g/{c_i}$ and $t_{c_1}\cdots{}t_{c_n}=1$. 
This equation is called the {\it monodromy} of a Lefschetz fibration. 
Conversely, if there are simple closed curves $c_1,\dots,c_n$ on
$\Sigma_g$ such that $t_{c_1}\cdots{}t_{c_n}=1$, then we can construct a
genus-$g$ Lefschetz fibration with the monodromy
$t_{c_1}\cdots{}t_{c_n}=1$. 

\begin{figure}[h]
\includegraphics[scale=1.0]{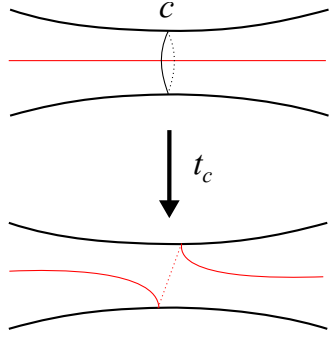}
\caption{The right Dehn twist about $c$.}\label{dehn}
\end{figure}

For a Lefschetz fibration $f:X\to{}S^2$, a smooth map $s:S^2\to{}X$ is a
section of $f$ if $f\circ{}s:S^2\to{}S^2$ is the identity map.

\begin{figure}[h]
\includegraphics[scale=0.8]{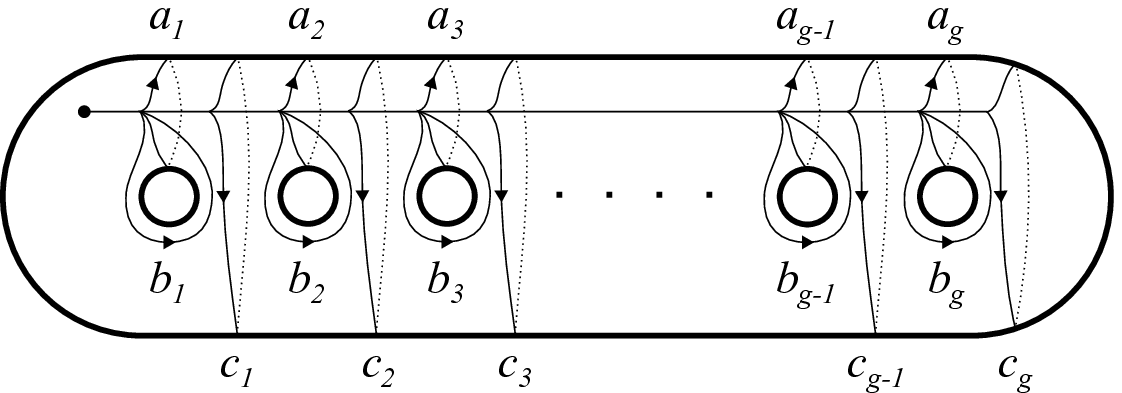}
\caption{}\label{abc}
\end{figure}

\begin{figure}[h]
\includegraphics[scale=0.5]{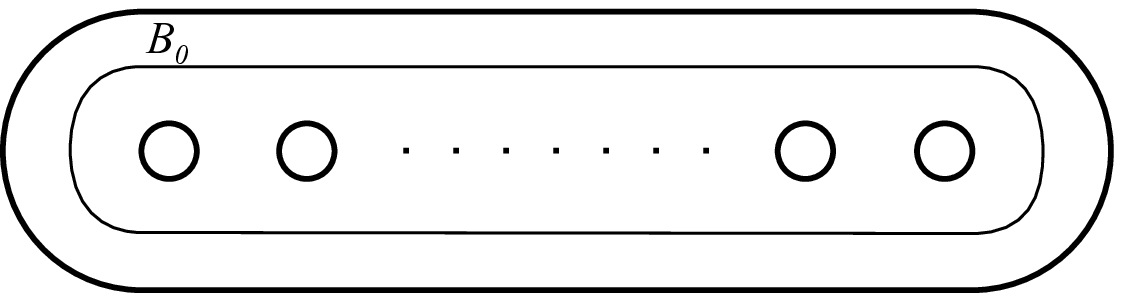}
\vspace{0.2cm}

\includegraphics[scale=0.5]{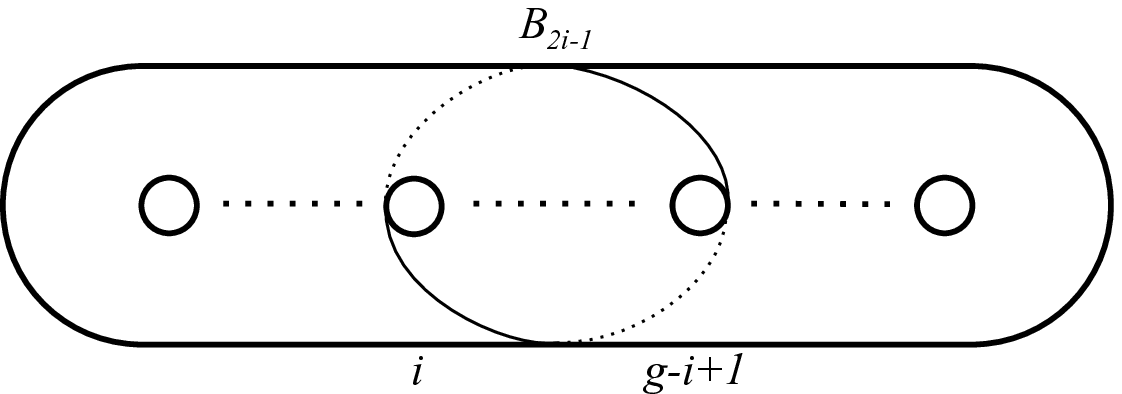}
\hspace{0.2cm}
\includegraphics[scale=0.5]{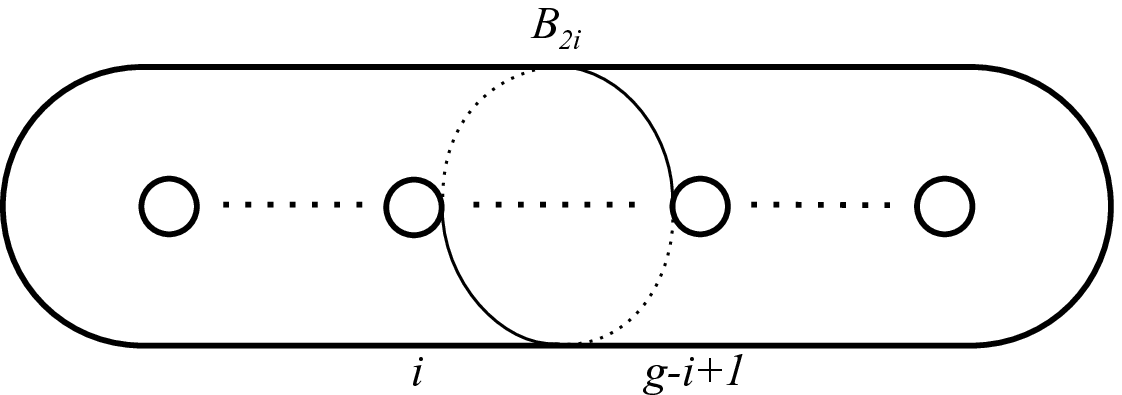}
\vspace{0.2cm}

\subfigure[The case $g$ is odd.]{\includegraphics[scale=0.5]{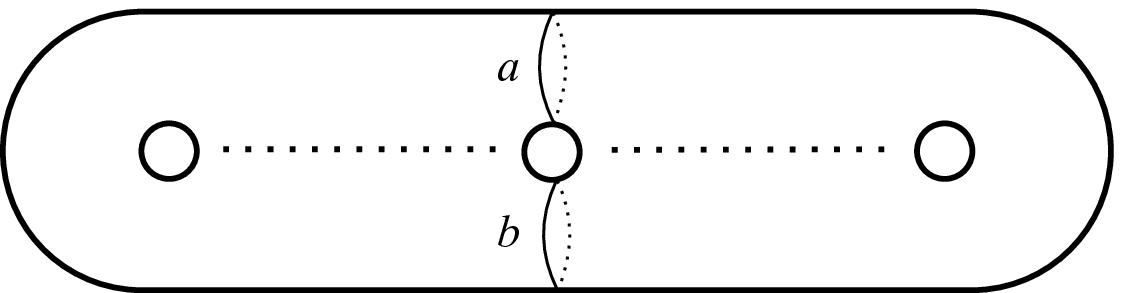}}
\hspace{0.2cm}
\subfigure[The case $g$ is even.]{\includegraphics[scale=0.5]{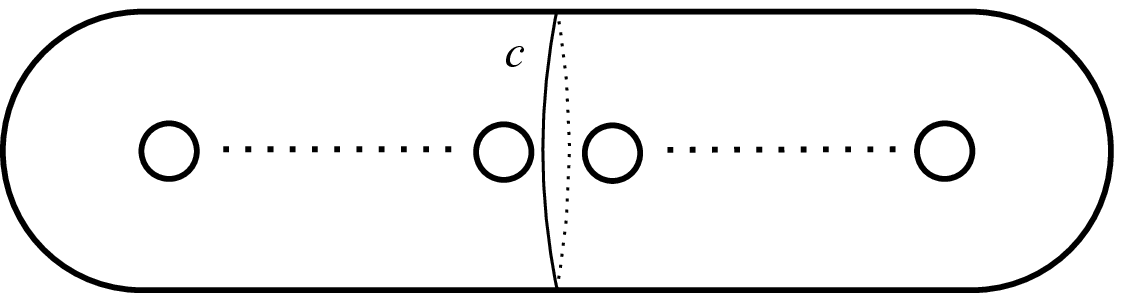}}
\caption{}\label{mkc}
\end{figure}

For a closed connected orientable surface $\Sigma_g$ of genus $g$, let
$a_1,\dots,a_g,b_1,\dots,b_g$ and $c_1,\dots,c_g$ be loops on $\Sigma_g$
as shown in Figure~\ref{abc}.
Then the fundamental group $\pi_1(\Sigma_g)$ of $\Sigma_g$ has a
following presentation
$$\pi_1(\Sigma_g)=\la{}a_1,b_1,\dots,a_g,b_g\mid{}r\ra,$$
where $r=b_g^{-1}\cdots{}b_1^{-1}(a_1b_1a_1^{-1})\cdots(a_gb_ga_g^{-1})$.

Let $B_0,\dots,B_g$ and $a,b,c$ be simple closed curves on $\Sigma_g$ as
shown in Figure~\ref{mkc}.
In this paper, let $W$ denote the following
$$
W=
\left\{
\begin{array}{ll}
(t_ct_{B_g}\cdots{}t_{B_0})^2&\textrm{when}~g~\textrm{is even}, \\
(t_a^2t_b^2t_{B_g}\cdots{}t_{B_0})^2&\textrm{when}~g~\textrm{is odd}.
\end{array}
\right.
$$
It was shown in \cite{k1} that $W=1$ in the mapping class group $\M_g$
of $\Sigma_g$.
In addition, the Lefschetz fibration $f_W:X_W\to{}S^2$ with the monodromy
$W=1$ has a section (see \cite{k1} and \cite{k2}).

\subsection{Preliminaries}

We now state the way to obtain the presentation of the fundamental group
of a Lefschetz fibration with a section.

\begin{prop}[cf. \cite{gs}]\label{2.1}
Let $f:X\to{}S^2$ be a genus-$g$ Lefschetz fibration with the monodromy
 $t_{c_1}\cdots{}t_{c_n}=1$. 
Suppose that $f$ has a section. 
Then we have
$$\pi_1(X)\cong\pi_1(\Sigma_g)/{\la{}c_1,\dots,c_n\ra},$$
where we regard $c_1,\dots,c_n$ as elements in $\pi_1(\Sigma_g)$.
\end{prop}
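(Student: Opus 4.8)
The plan is to use the standard handlebody/Kirby-calculus description of a Lefschetz fibration and read off the fundamental group via van Kampen's theorem. Recall that a genus-$g$ Lefschetz fibration $f:X\to S^2$ is built by taking $\Sigma_g\times D^2$ over a disk containing all the critical values, attaching one $2$-handle along each vanishing cycle $c_i$ (framed relative to the fiber framing), and then gluing in $\Sigma_g\times D^2$ over the complementary disk. The key point is that the existence of a section lets us control the contribution of this second piece.

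\medskip

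First I would compute $\pi_1$ of the piece over the disk $D$ containing the critical values. Over a disk with no critical values the total space is $\Sigma_g\times D^2$, whose fundamental group is $\pi_1(\Sigma_g)$. Each Lefschetz $2$-handle attached along the vanishing cycle $c_i$ kills exactly the loop $c_i$ in $\pi_1$, since attaching a $2$-handle along a curve imposes the relation that the attaching curve be nullhomotopic, and the framing does not affect $\pi_1$. Thus, applying van Kampen inductively over the handle attachments, the total space $X_D$ over $D$ satisfies
$$\pi_1(X_D)\cong\pi_1(\Sigma_g)/\la\la c_1,\dots,c_n\ra\ra,$$
where $\la\la\cdot\ra\ra$ denotes the normal closure; this is exactly the normal subgroup generated by $c_1,\dots,c_n$, matching the notation $\la c_1,\dots,c_n\ra$ in the statement.

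\medskip

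Next I would glue the remaining piece $X'=f^{-1}(S^2\setminus D)\cong\Sigma_g\times D^2$ back in along the fibered boundary, which is a $\Sigma_g$-bundle over $S^1$ whose monodromy is the identity (since $t_{c_1}\cdots t_{c_n}=1$), hence $\Sigma_g\times S^1$. By van Kampen, $\pi_1(X)$ is the pushout of $\pi_1(X_D)$ and $\pi_1(X')\cong\pi_1(\Sigma_g)$ over $\pi_1(\Sigma_g\times S^1)\cong\pi_1(\Sigma_g)\times\Z$. A priori, gluing in $\Sigma_g\times D^2$ could add the relation killing the $S^1$-direction and could also identify the two fiber-$\pi_1$ images; naively this might collapse information, so I must check it introduces no new relations on the fiber generators. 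This is where the section is essential: a section $s:S^2\to X$ gives a sphere meeting each fiber once, and its restriction over $S^2\setminus D$ provides a disk that caps off the $S^1$-factor of the boundary. Consequently the inclusion $X'\hookrightarrow X$ factors through a map that, on $\pi_1$, only kills the $\Z$ generated by the $S^1$-direction and imposes no extra identification among the fiber classes beyond those already present in $\pi_1(X_D)$.

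\medskip

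The main obstacle is precisely this last bookkeeping step: verifying that regluing $\Sigma_g\times D^2$ contributes nothing to $\pi_1$ except the already-accounted-for relations, so that the pushout collapses to $\pi_1(\Sigma_g)/\la c_1,\dots,c_n\ra$ rather than something smaller. The section resolves this by ensuring the boundary $S^1$-loop bounds in the complement, so that its normal closure is the only new relation and it is redundant. I would make this precise by choosing the section to lie in the complement of the vanishing-cycle handles, tracking the based loops explicitly, and invoking the standard fact (cf.\ \cite{gs}) that for a Lefschetz fibration with a section the fiber inclusion $\pi_1(\Sigma_g)\to\pi_1(X)$ is surjective with kernel exactly the normal closure of the vanishing cycles. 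Assembling these observations yields the claimed isomorphism.
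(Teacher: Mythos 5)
The paper offers no proof of this proposition at all---it is quoted from Gompf--Stipsicz with a ``cf.'' citation---so there is nothing internal to compare against; your sketch is the standard argument from that source (decompose over a disk $D$ containing the critical values, note that each Lefschetz handle kills the normal closure of its vanishing cycle, then glue back $\Sigma_g\times D^2$ and control the pushout via the section), and it is correct in outline. One point deserves sharpening, since it is exactly the step you flag as the main obstacle: the disk capping off the $S^1$-direction of $\partial f^{-1}(D)\cong\Sigma_g\times S^1$ inside $X'\cong\Sigma_g\times D^2$ exists whether or not $f$ has a section (any $\{pt\}\times D^2$ does this), so that alone cannot be what the section buys you. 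What the section actually provides is twofold: the restriction $s|_{S^2\setminus D}$ pins down \emph{which} horizontal lift $\lambda$ of $\partial D$ becomes nullhomotopic after gluing (a priori $\lambda$ is only determined up to multiplication by an element of $\pi_1(\Sigma_g)$, and killing the wrong lift would impose an unknown extra relation $\gamma=1$), and the restriction $s|_{D}$ exhibits a disk in $f^{-1}(D)$ bounding that same curve, so the one new relation $\lambda=1$ coming from the pushout is already a consequence of $c_1=\dots=c_n=1$. Finally, your closing appeal to ``the standard fact (cf.\ [GS]) that the fiber inclusion is surjective with kernel the normal closure of the vanishing cycles'' is an appeal to the proposition itself, so it cannot be used as the last step; the based-loop bookkeeping you defer there is precisely the content that has to be carried out.
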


For $x,y\in\M_g$, let $x^y=y^{-1}xy$. 
For example, for simple closed curves $c_1,\dots,c_n$ on $\Sigma_g$ and
$h\in\M_g$, we have 
$(t_{c_1}\cdots{}t_{c_n})^h=(h^{-1}t_{c_1}h)\cdots(h^{-1}t_{c_n}h)=t_{(c_1)h}\cdots{}t_{(c_n)h}$,
where $(c_i)h$ means the image of $c_i$ by $h$.

\begin{prop}[\cite{k2}]\label{2.2}
Let $f:X\to{}S^2$ be a genus-$g$ Lefschetz fibration with the monodromy
 $V=t_{c_1}\cdots{}t_{c_n}=1$. 
Suppose that $f$ has a section. 
Let $d$ be a simple closed curve on $\Sigma_g$ which intersects some
 $c_i$ transversely at only one point.  
Let $f':X'\to{}S^2$ be the genus-$g$ Lefschetz fibration with the
 monodromy $VV^{t_d}=1$.
 Then we have
$$\pi_1(X')\cong\pi_1(\Sigma_g)/{\la c_1,\dots,c_n,d\ra},$$
where we regard $c_1,\dots,c_n$ and $d$ as elements in
 $\pi_1(\Sigma_g)$. 
\end{prop}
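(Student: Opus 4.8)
The plan is to apply Proposition~\ref{2.1} to $f'$ and then reduce the resulting normal closure to the claimed one by a purely group-theoretic argument about the Dehn twist action on $\pi_1(\Sigma_g)$. To invoke Proposition~\ref{2.1} I first need $f'$ to admit a section. Let $p\in\Sigma_g$ be the point cut out by the section of $f$ in a fixed regular fiber; after an isotopy I may assume $p$ is disjoint from $d$ and from all the $c_i$, since these are $1$-dimensional. Via the Birman exact sequence, the existence of a section of $f$ means that $V$ lifts to the identity in the mapping class group $\M_g$ of $(\Sigma_g,p)$ fixing $p$. Since $t_d$ also fixes $p$, conjugation gives $V^{t_d}=t_d^{-1}Vt_d=1$ in this marked mapping class group, whence $VV^{t_d}=1$ there as well; this exhibits a section of $f'$. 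The vanishing cycles of $f'$ are $c_1,\dots,c_n,(c_1)t_d,\dots,(c_n)t_d$, so Proposition~\ref{2.1} yields
$$\pi_1(X')\cong\pi_1(\Sigma_g)/{\la c_1,\dots,c_n,(c_1)t_d,\dots,(c_n)t_d\ra}.$$

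It then remains to prove the equality of normal closures $N_1=\la c_1,\dots,c_n,(c_1)t_d,\dots,(c_n)t_d\ra$ and $N_2=\la c_1,\dots,c_n,d\ra$ in $\pi_1(\Sigma_g)$. The key input is the standard description of the twist action on $\pi_1$: because $t_d$ is supported in an annular neighborhood of $d$, for any loop $x$ the class $(x)t_d$ is obtained from $x$ by inserting one conjugate of $d^{\pm1}$ at each transverse intersection of $x$ with $d$. For the inclusion $N_1\subseteq N_2$ I would observe that each $(c_j)t_d$ is accordingly a product of $c_j$ with conjugates of $d^{\pm1}$, all of which lie in the normal subgroup $N_2$; hence every generator of $N_1$ lies in $N_2$. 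For the reverse inclusion I would pass to $Q=\pi_1(\Sigma_g)/N_1$, where $c_j=1$ and $(c_j)t_d=1$ for every $j$. Applying the insertion formula to the distinguished curve $c_i$ meeting $d$ transversely in a \emph{single} point gives $(c_i)t_d=c_i\cdot gd^{\varepsilon}g^{-1}$ for some $g\in\pi_1(\Sigma_g)$ and $\varepsilon=\pm1$; evaluating in $Q$ and using $c_i=1=(c_i)t_d$ forces $gd^{\varepsilon}g^{-1}=1$, so $d=1$ in $Q$, i.e. $d\in N_1$. Thus $N_2\subseteq N_1$, the two normal closures coincide, and the claimed presentation of $\pi_1(X')$ follows.

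The main obstacle is making the insertion formula precise in terms of basepoints and connecting arcs, and in particular confirming that a single transverse intersection contributes exactly one conjugate of $d^{\pm1}$. This single-insertion fact is precisely what allows one to extract $d$ from the relation $(c_i)t_d=1$, and it is where the hypothesis that $d$ meets $c_i$ at only one point is essential: with several (possibly cancelling) intersections one could no longer isolate $d$. By contrast, the verification that $f'$ has a section and the inclusion $N_1\subseteq N_2$ are comparatively routine.
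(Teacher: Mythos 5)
Your argument is correct, and it is essentially the proof of this statement: the paper itself quotes Proposition~\ref{2.2} from \cite{k2} without proof, and Korkmaz's argument there is exactly your route --- verify that $f'$ inherits a section, apply Proposition~\ref{2.1} to get $\pi_1(X')\cong\pi_1(\Sigma_g)/\la c_1,\dots,c_n,(c_1)t_d,\dots,(c_n)t_d\ra$, and then use the single-point intersection with $c_i$ to show $(c_i)t_d=c_i\cdot gd^{\pm1}g^{-1}$ up to conjugation, so that the two normal closures coincide. No gaps; your closing remark correctly identifies the one-intersection hypothesis as the step that lets $d$ be extracted.
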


In this paper, we denote the Lefschetz fibration with the monodromy
$V=1$ by $f_V:X_V\to{}S^2$.
For example, in the above proposition, $f=f_V$, $X=X_V$ and
$f'=f_{VV^{t_c}}$, $X'=X_{VV^{t_c}}$.

We next state results of Korkmaz \cite{k2}.

\begin{thm}[\cite{k2}]\label{2.3}
\begin{enumerate}
 \item Let $\Sigma_g$ be a closed connected orientable surface of genus
       $g\geq0$.
       Then we have $g(\pi_1(\Sigma_g))=g$.
 \item Let $m(\Gamma)$ denote the minimal number of generators for
       $\Gamma$.
       Then we have $\frac{m(\Gamma)}{2}\leq{}g(\Gamma)$, with the
       equality if and only if $\Gamma$ is isomorphic to
       $\pi_1(\Sigma_g)$.
 \item For the mapping class group $\M_1$ of $\Sigma_1$, we have
       $2\leq{}g(\M_1)\leq4$.
 \item Let $B_n$ denote the $n$-strands braid group. Then for $n\geq3$,
       we have $2\leq{}g(B_n)\leq5$.
 \item Let $n,k\geq 0$ be integers with $n+k\geq3$, and let
       $m_1,\dots,m_k\geq2$ be integers.
       Then we have
       $\frac{n+k+1}{2}\leq{}g(\Z^n\oplus\Z_{m_1}\oplus\cdots\oplus\Z_{m_k})\leq2(n+k)+1$. 
\end{enumerate}
\end{thm}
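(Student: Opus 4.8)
The plan is to use the single lower bound of part~(2) as the engine behind every lower estimate, and to obtain each upper estimate by exhibiting an explicit genus-$g$ Lefschetz fibration---the trivial bundle for~(1), and the fixed relation $W=1$ modified by repeated use of Proposition~\ref{2.2} for the remaining groups.

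For part~(2) I would first extract from Proposition~\ref{2.1} the structural fact that for \emph{any} genus-$g$ Lefschetz fibration the group $\pi_1(X)$ is a quotient of $\pi_1(\Sigma_g)$, since the vanishing cycles normally generate the kernel and $\pi_1(S^2)=1$. As $\pi_1(\Sigma_g)$ is generated by $2g$ elements, so is every quotient, whence $m(\Gamma)\le 2g$ and $g(\Gamma)\ge m(\Gamma)/2$. The ``if'' half of the equality clause is immediate from part~(1) together with $m(\pi_1(\Sigma_g))=2g$. The converse is the delicate point: if $g(\Gamma)=m(\Gamma)/2=g$ one must show the surjection $\pi_1(\Sigma_g)\twoheadrightarrow\Gamma$ is injective. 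Here I would use that killing a \emph{non-separating} simple closed curve turns $\pi_1(\Sigma_g)$ into $\pi_1(\Sigma_{g-1})*\Z$, dropping the minimal number of generators below $2g$; hence equality forces every vanishing cycle to be separating or absent. Ruling out nontrivial separating configurations, so that the kernel vanishes and $\Gamma\cong\pi_1(\Sigma_g)$, is the step I expect to require the most care.

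The lower bounds in~(3), (4), (5) then follow quickly. A genus-$1$ fibration has $\pi_1(X)=\Z^2/N$, which is abelian; since $\M_1$ and $B_n$ ($n\ge3$) are non-abelian and $2$-generated, each has genus at least $2$. For the abelian groups of~(5) this is not available, so there I would invoke the equality clause: since $m(\Z^n\oplus\Z_{m_1}\oplus\cdots\oplus\Z_{m_k})=n+k$, equality would force this group to be isomorphic to $\pi_1(\Sigma_{(n+k)/2})$, which is non-abelian once $n+k\ge3$; the contradiction upgrades the bound to $g\ge(n+k+1)/2$. For the upper bounds the master construction starts from $f_W$, computes $\pi_1(X_W)$ (a free group, the vanishing cycles of $W$ killing $b_1,\dots,b_g$), and then appends the words $V^{t_d}$ of Proposition~\ref{2.2}, each killing one further curve $d$, until the quotient is exactly the target group: a short presentation of $\M_1$ or of $B_n$ realizes the generators among the $a_i$ and imposes each relator by a suitably chosen curve, the number of generators and relators that must be encoded pinning down the constants $4$ and $5$, while a tailored monodromy handles the abelian case within genus $2(n+k)+1$. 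For part~(1) no singular fibers are needed at all: the trivial bundle $\Sigma_g\times S^2\to S^2$ is a genus-$g$ Lefschetz fibration with $\pi_1=\pi_1(\Sigma_g)$, giving $g(\pi_1(\Sigma_g))\le g$ and hence equality.

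The main obstacle is the explicit low-genus realization underlying the upper bounds. One must place the curves $d$ so that each meets a previous vanishing cycle transversely in a single point---the hypothesis of Proposition~\ref{2.2}---while simultaneously representing the intended element of $\pi_1(\Sigma_g)$ and ensuring that the whole collection normally generates \emph{exactly} the relations of the chosen presentation and nothing more. Achieving this within genus $4$, $5$ and $2(n+k)+1$, rather than the much larger genus produced by the general bound of Theorem~\ref{1.1}, is precisely where the special short presentations of $\M_1$, $B_n$ and the abelian groups must be exploited, and verifying it is a delicate curve-tracking argument on $\Sigma_g$ that I expect to be the crux of the theorem.
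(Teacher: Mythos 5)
The paper does not prove Theorem~\ref{2.3}: it is quoted from Korkmaz \cite{k2} as background, so there is no internal proof to compare yours against; I can only judge the proposal on its own terms. Your overall strategy (generator counting for the lower bounds, explicit monodromies built from $W$ via Proposition~\ref{2.2} for the upper bounds) is the right one, but there are concrete gaps. The most serious is the equality case of~(2). Your counting argument does force every vanishing cycle to be essential and separating: a nonseparating cycle exhibits $\Gamma$ as a quotient of $\Z*\pi_1(\Sigma_{g-1})$, which needs only $2g-1$ generators. But the next step fails as described: killing a separating curve of genus $h$ yields $\pi_1(\Sigma_h)*\pi_1(\Sigma_{g-h})$, which by Grushko's theorem still requires exactly $2g$ generators and is a \emph{proper} quotient not isomorphic to $\pi_1(\Sigma_g)$. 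So ``ruling out nontrivial separating configurations'' cannot be done by counting generators at all; one needs the genuinely geometric input that a relatively minimal Lefschetz fibration over $S^2$ with at least one singular fiber cannot have only separating vanishing cycles. That is where the real work of Korkmaz's proof lies, and your sketch offers no mechanism for it.

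Two further gaps. First, the lower bound in~(5) does not follow from~(2) as you claim: $m(\Z^n\oplus\Z_{m_1}\oplus\cdots\oplus\Z_{m_k})$ equals $n+k$ only when the $m_i$ form a divisibility chain; for pairwise coprime torsion coefficients (e.g.\ $\Z_2\oplus\Z_3\oplus\Z_5\cong\Z_{30}$) one has $m(\Gamma)=1$, and your argument yields only $g\geq1$ rather than the asserted $\frac{n+k+1}{2}=2$. Some other input (for instance the classification of fundamental groups of genus-$0$ and genus-$1$ Lefschetz fibrations, as in the appendix) is needed there. Second, for the upper bounds in (3)--(5) you correctly identify what must be done --- place curves meeting earlier vanishing cycles transversely in one point and representing the relators of a two-generator presentation --- but you construct none of them, so the constants $4$, $5$ and $2(n+k)+1$ are asserted rather than established; this is exactly the kind of explicit curve construction that Section~4 of the present paper carries out (for the sharper bounds of Theorem~\ref{1.2}). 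The treatment of part~(1) via the trivial bundle $\Sigma_g\times S^2$, and the genus-$\geq2$ lower bounds for the non-abelian two-generator groups in (3) and (4), are fine.
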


In Theorem~\ref{1.2}, (4) and (5) of Theorem~\ref{2.3} are improved.

\section{Proof of Theorem~\ref{1.1}}

First of all, we show a proposition used in proofs of Theorem~\ref{1.1}
and \ref{1.2}. 
For elements $x$ and $y$ in a group, let $[x,y]=xyx^{-1}y^{-1}$.

\begin{prop}\label{3.1}
Let $f_W:X_W\to{}S^2$ be the genus-$g$ Lefschetz fibration with the
 monodromy $W=1$, where $W$ is as above, and let $a_1,b_1,\dots,a_g,b_g$
 be the generators of $\pi_1(\Sigma_g)$ as shown in
{\rm Figure~\ref{abc}}.
Then we have followings:
\begin{enumerate}
 \item (See \cite{k2}.)
       Let $U=WW^{t_{b_1}}\cdots{}W^{t_{b_g}}$, then the fundamental
       group $\pi_1(X_U)$ of the Lefschetz fibration $X_U$ has the
       following presentation
       $$
       \pi_1(X_U)=
       \left\{
       \begin{array}{ll}
	\left<
	a_1,b_1,\dots,a_g,b_g
	\left|
	\begin{array}{l}
	b_1,\dots,b_g,\\
	a_1a_g,\dots,a_{\frac{g}{2}}a_{\frac{g+2}{2}}
	\end{array}
	\right.
	\right>
	&\textrm{when}~g~\textrm{is even},\\
	\left<
	a_1,b_1,\dots,a_g,b_g
	\left|
	\begin{array}{l}
	b_1,\dots,b_g,\\
	a_1a_g,\dots,a_{\frac{g-1}{2}}a_{\frac{g+3}{2}},\\
	a_{\frac{g+1}{2}}
	\end{array}
	\right.
	\right>
	&\textrm{when}~g~\textrm{is odd},
       \end{array}
       \right.
       $$
       and, the group $\pi_1(X_U)$ is isomorphic to the free group of rank
       $[\frac{g}{2}]$.
 \item Let $U'=WW^{t_{b_2}}\cdots{}W^{t_{b_{g-1}}}$, then the fundamental
       group $\pi_1(X_{U'})$ of the Lefschetz fibration $X_{U'}$ has the
       following presentation
       $$
       \pi_1(X_{U'})=
       \left\{
       \begin{array}{ll}
	\left<
	a_1,b_1,\dots,a_g,b_g
	\left|
	\begin{array}{l}
	[a_1,b_1],\\
	b_2,\dots,b_{g-1},\\
	b_1b_g,\\
	a_1a_g,\dots,a_{\frac{g}{2}}a_{\frac{g+2}{2}}
	\end{array}
	\right.
	\right>
	&\textrm{when}~g~\textrm{is even},\\
	\left<
	a_1,b_1,\dots,a_g,b_g
	\left|
	\begin{array}{ll}
	[a_1,b_1],\\
	b_2,\dots,b_{g-1},\\
	b_1b_g,\\
	a_1a_g,\dots,a_{\frac{g-1}{2}}a_{\frac{g+3}{2}},\\
	a_{\frac{g+1}{2}}
	\end{array}
	\right.
	\right>
	&\textrm{when}~g~\textrm{is odd},
       \end{array}
       \right.
       $$
       and, the group $\pi_1(X_{U'})$ is isomorphic to the free product
       of the free group of rank $([\frac{g}{2}]-1)$ with $\Z\oplus\Z$.
\end{enumerate}
\end{prop}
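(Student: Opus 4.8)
The plan is to reduce everything to Proposition~\ref{2.1} and then carry out an explicit computation of the vanishing cycles as elements of $\pi_1(\Sigma_g)$, running exactly parallel to the computation behind part (1) in \cite{k2}; the only difference is the two conjugating factors that are omitted from $U'$. First I would record that $U'=1$ holds in $\M_g$: since $W=1$, each conjugate $W^{t_{b_i}}=t_{b_i}^{-1}Wt_{b_i}$ also equals $1$, so $U'=WW^{t_{b_2}}\cdots W^{t_{b_{g-1}}}=1$, and hence $f_{U'}$ is a genuine genus-$g$ Lefschetz fibration. I would then note that $X_{U'}$ admits a section, so that Proposition~\ref{2.1} applies: $X_{U'}$ is a fiber sum of copies of $X_W$, each of which has a section by the remark following the definition of $W$, and such fiber sums again carry a section — this is the same point that lets $X_U$ in part (1) admit a section. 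Granting this, Proposition~\ref{2.1} gives
$$\pi_1(X_{U'})\cong\pi_1(\Sigma_g)/\la c_1,\dots,c_N\ra,$$
where $c_1,\dots,c_N$ are the vanishing cycles of $U'$, namely the cycles of $W$ together with their images under $t_{b_i}$ for $i=2,\dots,g-1$, regarded as elements of $\pi_1(\Sigma_g)$.

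The heart of the argument is to evaluate this normal closure using Figure~\ref{mkc}. I would read off the homotopy classes of $B_0,\dots,B_g$ and of $a,b,c$, and apply the Dehn twist formula to compute the classes of their images under each $t_{b_i}$. The decisive bookkeeping point is the comparison with part (1): there the presentation arises from the cycles of $W$ together with the conjugates by $t_{b_i}$ for \emph{all} $i=1,\dots,g$, and one checks that the cycles of $W$ alone already impose the $a$-pairings $a_1a_g,\dots$ (and $a_{(g+1)/2}$ when $g$ is odd) together with the two relations $[a_1,b_1]$ and $b_1b_g$, while each extra factor $W^{t_{b_i}}$ contributes precisely the single relation $b_i=1$. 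Consequently, omitting the two factors $W^{t_{b_1}}$ and $W^{t_{b_g}}$ removes exactly the relations that would kill $b_1$ and $b_g$: it leaves $b_2,\dots,b_{g-1}$ still killed and retains $[a_1,b_1]$ and $b_1b_g$ in place of $b_1$ and $b_g$. This is what produces the stated relation set in both parities.

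Finally I would simplify and identify the group. Substituting the relations $b_2=\cdots=b_{g-1}=1$, $b_g=b_1^{-1}$, $a_{g+1-i}=a_i^{-1}$ (and $a_{(g+1)/2}=1$ in the odd case) into the surface relation $r=b_g^{-1}\cdots b_1^{-1}(a_1b_1a_1^{-1})\cdots(a_gb_ga_g^{-1})$ and using $[a_1,b_1]=1$, one sees that $r$ collapses to the identity, so the surface relation is redundant and may be dropped. The surviving generators are then $a_1,\dots,a_{[\frac{g}{2}]}$ and $b_1$, subject only to $[a_1,b_1]=1$, whence
$$\pi_1(X_{U'})\cong\la a_1,b_1\mid[a_1,b_1]\ra * \la a_2\ra * \cdots * \la a_{[\frac{g}{2}]}\ra\cong(\Z\oplus\Z)*F_{[\frac{g}{2}]-1},$$
the asserted free product. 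I expect the middle paragraph to be the main obstacle: correctly determining the homotopy classes of the vanishing cycles from Figure~\ref{mkc} — in particular their images under the twists $t_{b_i}$ — and verifying the exact attribution of relations to individual factors, so that dropping $W^{t_{b_1}}$ and $W^{t_{b_g}}$ is seen to replace $\{b_1,b_g\}$ by precisely $\{[a_1,b_1],\,b_1b_g\}$, neither weaker nor stronger.
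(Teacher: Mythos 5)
Your overall strategy is the same as the paper's: describe the vanishing cycles of $W$ as elements of $\pi_1(\Sigma_g)$, apply Proposition~\ref{2.1} to $X_W$ (which has a section), and then use Proposition~\ref{2.2} to record that each conjugated factor $W^{t_{b_i}}$ contributes exactly the relation $b_i=1$, since each $b_i$ meets some $B_j$ transversely in one point. Your final Tietze simplification of the displayed presentation to $(\Z\oplus\Z)\ast F_{[\frac{g}{2}]-1}$ is also correct.

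However, your ``decisive bookkeeping point'' is false as stated, and it is precisely the step you flag as the main obstacle. The cycles of $W$ alone do \emph{not} impose $[a_1,b_1]$ and $b_1b_g$: the paper computes (for $g$ even) that the normal closure of the $W$-cycles is generated by $c_g$, $c_{\frac{g}{2}}$, the words $a_ia_{g-i+1}$ and the words $b_ia_{g-i+1}b_{g-i+1}a_{g-i+1}^{-1}$, so that $\pi_1(X_W)\cong\pi_1(\Sigma_{[\frac{g}{2}]})$ --- a surface group in which $a_1$ and $b_1$ do not commute once $g\geq4$. The relations $[a_1,b_1]$ and $b_1b_g$ emerge only after the relations $b_2=\cdots=b_{g-1}=1$ coming from the conjugated factors are also imposed: modulo these, the cycle $c_{\frac{g}{2}}$ (resp.\ $c_{\frac{g-1}{2}}$ when $g$ is odd) reduces to $b_1^{-1}(a_1b_1a_1^{-1})$, whose vanishing is equivalent to $[a_1,b_1]=1$, while the cycle giving $b_1a_gb_ga_g^{-1}$, combined with $a_1a_g$ and $[a_1,b_1]$, yields $b_1b_g$. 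So the correct assertion is not that dropping $W^{t_{b_1}}$ and $W^{t_{b_g}}$ replaces the relations $b_1$, $b_g$ by $[a_1,b_1]$, $b_1b_g$ factor-by-factor, but that the \emph{joint} normal closure of the $W$-cycles together with $b_2,\dots,b_{g-1}$ coincides with that of the displayed relator set. With this correction your plan coincides with the paper's proof.
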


\begin{proof}
Simple closed curves $B_0,\dots,B_g$ and $a,b,c$ as shown in
 Figure~\ref{mkc} can be described in $\pi_1(\Sigma_g)$, up to
 conjugation, as follows
\begin{itemize}
 \item $B_{2k}=a_kb_{k+1}b_{k+2}\cdots{}b_{g-k-1}b_{g-k}c_{g-k}a_{g-k+1}$, 
       where $0\leq{}k\leq\frac{g}{2}$,
 \item $B_{2k+1}=a_{k+1}b_{k+1}b_{k+2}\cdots{}b_{g-k-1}b_{g-k}c_{g-k}a_{g-k}$,
       where $0\leq{}k\leq\frac{g}{2}$,
 \item $a=a_{\frac{g+1}{2}}$, $b=c_{\frac{g-1}{2}}a_{\frac{g+1}{2}}$ and
       $c=c_{\frac{g}{2}}$,
\end{itemize} 
where let $a_0=a_{g+1}=1$.
In addition, note that
$c_i=b_i^{-1}\cdots{}b_1^{-1}(a_1b_1a_1^{-1})\cdots(a_ib_ia_i^{-1})$ up
 to conjugation, for $1\leq{}i\leq{}g$. 
Since $X_W$ has a section, by Proposition~\ref{2.1}, we first obtain a
 presentation of $\pi_1(X_W)$ as follows.
$$
\pi_1(X_W)=\left\{
\begin{array}{ll}
\left\la
a_1,b_1,\dots,a_g,b_g
\left|
\begin{array}{l}
c_g,c_{\frac{g}{2}},\\
a_1a_g,\dots,a_{\frac{g}{2}}a_{\frac{g+2}{2}},\\
b_1a_gb_ga_g^{-1},\dots,b_{\frac{g}{2}}a_{\frac{g+2}{2}}b_{\frac{g+2}{2}}a_{\frac{g+2}{2}}^{-1}
\end{array}
\right.
\right\ra&\textrm{when}~g~\textrm{is even},\\
\left\la
a_1,b_1,\dots,a_g,b_g
\left|
\begin{array}{l}
c_g,a_{\frac{g+1}{2}},b_{\frac{g+1}{2}},c_{\frac{g-1}{2}},\\
a_1a_g,\dots,a_{\frac{g-1}{2}}a_{\frac{g+3}{2}},\\
b_1a_gb_ga_g^{-1},\dots,b_{\frac{g-1}{2}}a_{\frac{g+3}{2}}b_{\frac{g+3}{2}}a_{\frac{g+3}{2}}^{-1}
\end{array}
\right.
\right\ra&\textrm{when}~g~\textrm{is odd}.
\end{array}
\right.
$$
(We have that $\pi_1(X_W)$ is isomorphic to $\pi_1(\Sigma_{[\frac{g}{2}]})$.)
Since each $b_i$ intersects some $B_j$ transversely at only one point,
 by Proposition~\ref{2.2}, we obtain the claim.
\end{proof}

{\bf Remark.}
From Proposition~\ref{3.1}, we have followings.
\begin{itemize}
 \item For $n\geq1$, there are genus-$2n$ and $(2n+1)$ Lefschetz
       fibrations whose fundamental groups are isomorphic to the free
       group of rank $n$.
 \item For $n\geq2$, there are genus-$(2n-2)$ and $(2n-1)$ Lefschetz
       fibrations whose fundamental groups are isomorphic to the free
       product of the free group of rank $(n-2)$ with $\Z\oplus\Z$.
\end{itemize}

\begin{figure}[h]
\subfigure[]{\includegraphics[scale=1.0]{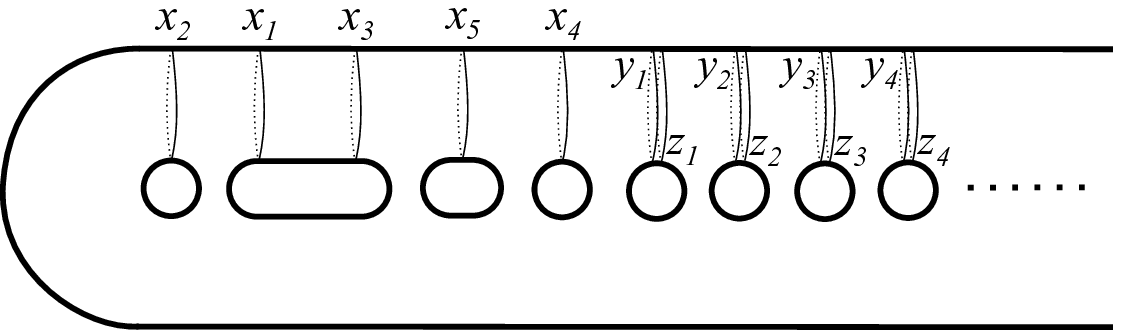}}

\subfigure[]{\includegraphics[scale=1.0]{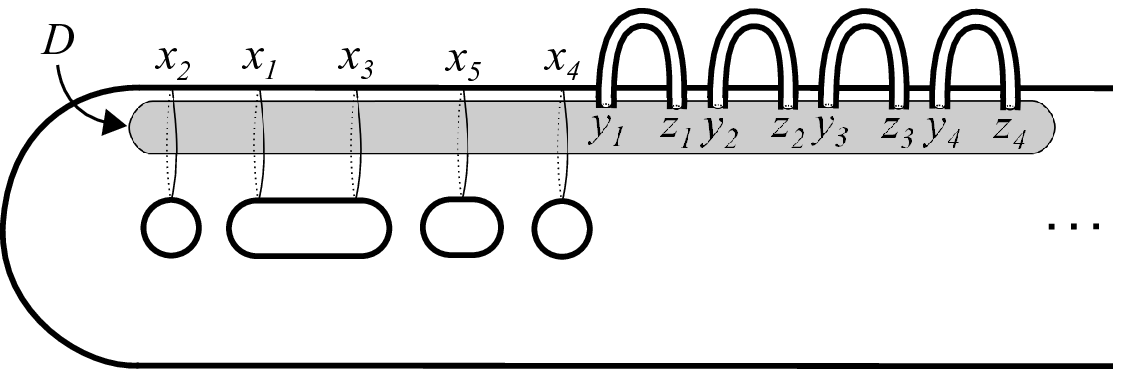}}

\subfigure[]{\includegraphics[scale=1.0]{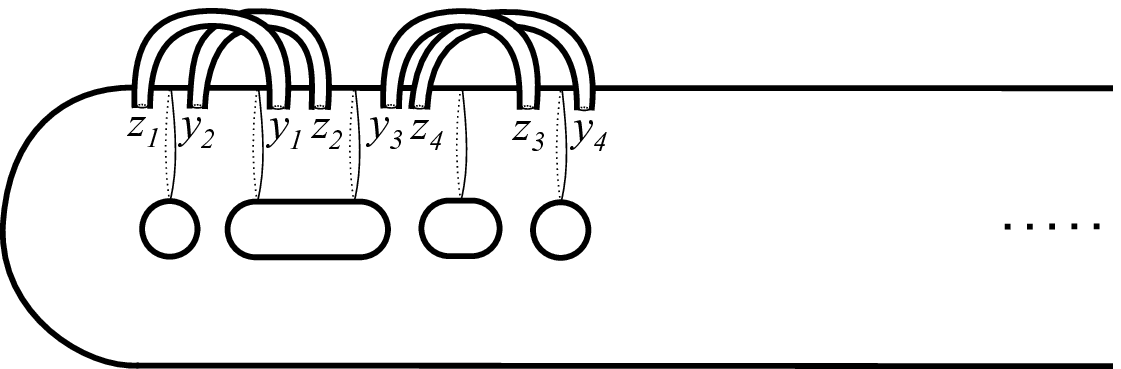}}
\caption{The loop $R$ in the case $n=4,r=g_2g_1g_2^2g_4^{-1}g_3^{-2}$.}\label{r1}
\end{figure}

\begin{figure}[h]
\subfigure[]{\includegraphics[scale=1.0]{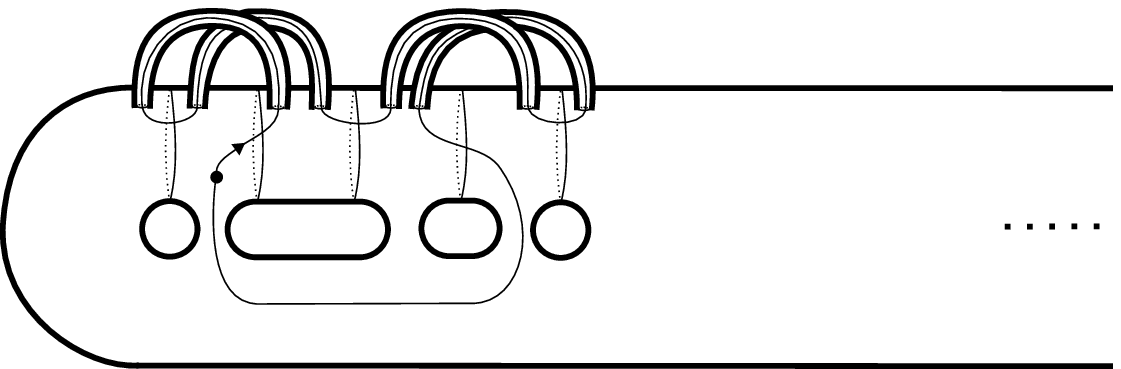}}

\subfigure[The loop $R.$]{\includegraphics[scale=1.0]{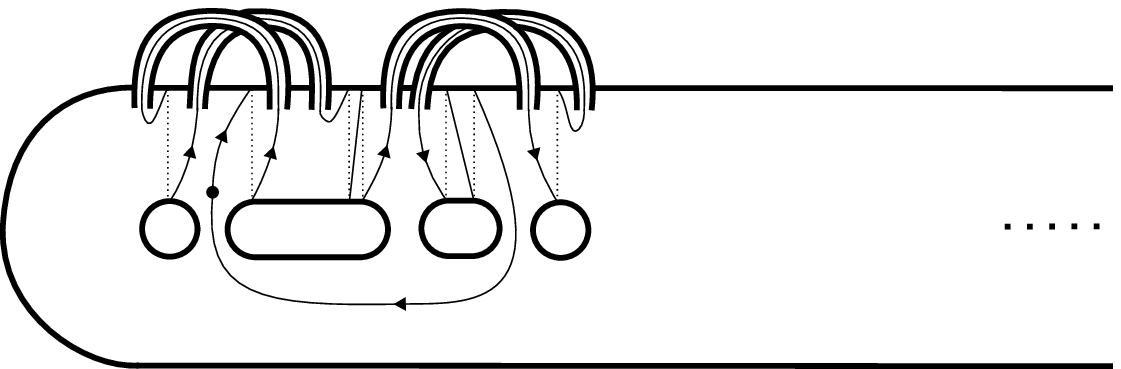}}

\subfigure[]{\includegraphics[scale=1.0]{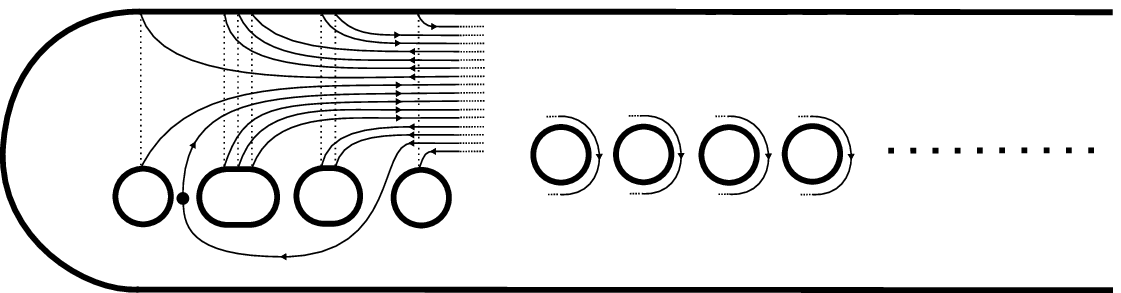}}
\caption{The loop $R$ in the case $n=4,r=g_2g_1g_2^2g_4^{-1}g_3^{-2}$.}\label{r2}
\end{figure}

\begin{figure}[h]
\includegraphics[scale=0.6]{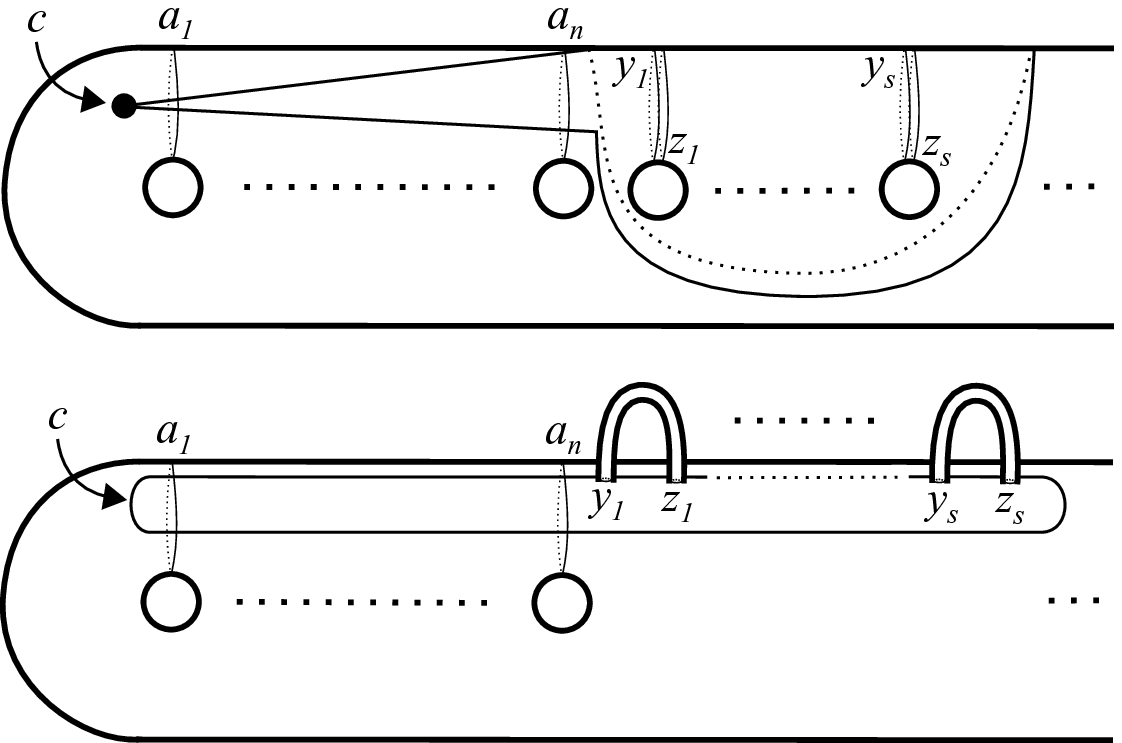}
\caption{The loop $c$ where $s=\ell(r_i)-1$.}\label{c}
\end{figure}

Let $\Gamma$ be a finitely presented group with a presentation
$\Gamma=\la{}g_1,\dots,g_n\mid{}r_1,\dots,r_k\ra$ and let
$\displaystyle\ell=\max_{1\leq{}i\leq{}k}\{\ell(r_i)\}$.
For $g\geq{}n+\ell-1$ and $r_i$, we construct a simple closed curve $R_i$
on $\Sigma_g$ as below.

At first, we construct a simple closed curve $R$ in the case $n=4$ and 
$r=g_2g_1g_2^2g_4^{-1}g_3^{-2}$ as an example.
Note that $\ell(r)=5$.
Let $x_1,x_2,x_3,x_4,x_5$ be loops on $\Sigma_g$ which are homotopic to
$a_2,a_1,a_2,a_4$ and $a_3$, respectively, as shown in
Figure~\ref{r1}~(a).
Let $y_1,y_2,y_3,y_4$ be loops on $\Sigma_g$ which are homotopic to 
$a_5,a_6,a_7,a_8$, respectively, and let $z_1,z_2,z_3,z_4$ be loops on
$\Sigma_g$ which are homotopic to $a_5,a_6,a_7,a_8$, respectively, as
shown in Figure~\ref{r1}~(a).
First we deform $\Sigma_g$ around $y_1,z_1,\dots,y_4,z_4$ as shown in
Figure~\ref{r1}~(b).
Then let $D$ be a subsurfase containing $y_t$ and $z_t$ which is
surrounded by a simple closed curve on $\Sigma_g$ as shown in
Figure~\ref{r1}~(b).
Next, for $1\leq{}t\leq4$, we move $y_t$ to the right side of $x_t$ in
$D$, and $z_t$ to the left side of $x_{t+1}$ in $D$, as shown in
Figure~\ref{r1}~(c).
Let $\overline{R}$ be the loop as shown in Figure~\ref{r2}~(a), and let 
$R=(\overline{R})t_{x_1}^{-1}t_{x_2}^{-1}t_{x_3}^{-2}t_{x_4}t_{x_5}^2$,
as shown in Figure~\ref{r2}~(b).
Finally, we deform the surface so that $y_1,\dots,y_4$ and
$z_1,\dots,z_4$ go back to their original position as shown in
Figure~\ref{r2}~(c).

In general, a loop $R_i$ is constructed as follows.
Let $r_i=g_{j(1)}^{m(1)}\cdots{}g_{j(\ell(r_i))}^{m(\ell(r_i))}$.
For $1\leq{}t\leq\ell(r_i)$, let $x_t$ be a loop on $\Sigma_g$ which is
homotopic to $a_{j(t)}$.
If $j(s)=j(s')$ for some $s<s'$, we put $x_{s'}$ to the right side of
$x_s$.
For $1\leq{}t\leq\ell(r_i)-1$, let $y_t$ and $z_t$ be loops on $\Sigma_g$
which are homotopic to $a_{n+t}$, such that $z_t$ is in the right side
of $y_t$.

First we deform $\Sigma_g$ around
$y_1,z_1,\dots,y_{\ell(r_i)-1},z_{\ell(r_i)-1}$, similarly to the above
example.
Let $c$ be a simple closed curve which is described in
$\pi_1(\Sigma_g)$ as follows
$$c=(a_{n+1}b_{n+1}a_{n+1}^{-1})\cdots(a_{n+\ell(r_i)-1}b_{n+\ell(r_i)-1}a_{n+\ell(r_i)-1}^{-1})b_{n+\ell(r_i)-1}^{-1}\cdots{}b_{n+1}^{-1},$$
and intersects each of $a_1,\dots,a_n$ at two points, as shown in
Figure~\ref{c}.
Then let $D$ be a subsurface whose boundary is $c$, and which contains
$y_t$ and $z_t$.

Next, for $1\leq{}t\leq\ell(r_i)-1$, we move $y_t$ to the right side of
$x_t$ in $D$, and $z_t$ to the left side of $x_{t+1}$ in $D$.
We regard that this motion does not affect on loops $a_i,b_i$ and $c_i$.
Hence $x_1,\dots,x_{\ell(r_i)}$ also do not deform, as shown in
Figure~\ref{r1}~(a).

After that, we define a simple closed curve as shown in
Figure~\ref{r2}~(a).
More precisely, we construct arcs $L_i$ and $L_i'$ as follows.
The arc $L_i$ is in $D$.
$L_i$ begins from the point at the left side of $x_1$ on the loop $c$,
crosses $x_1,y_1,z_1,x_2,y_2,z_3,\dots$, in this order, finally crosses
$x_{\ell(r_i)}$, and stops at the right side of $x_{\ell(r_i)}$ on the
loop $c$.
Let $L_i'$ be an arc whose base point is the end point of $L_i$, end
point is the base point of $L_i$, and which does not intersect the
interior of $D$ and loops $a_1,b_1,\dots,a_n,b_n$ and $c_n$.
Note that the surface which is obtained by removing loops $c$,
$a_1,b_1,\dots,a_n,b_n$ and $c_n$ from $\Sigma_g$, and which contains
$L_i'$ is a disk.
Hence the arc $L_i'$ is unique up to homotopy relative to the base
point and the end point.
Let $L_i\cdot{}L_i'$ denote the composition of $L_i$ and $L_i'$.

We now define 
$R_i=(L_i\cdot{}L_i')t_{x_1}^{-m(1)}\cdots{}t_{x_{\ell(r_i)}}^{-m(\ell(r_i))}$.
Finally, we deform the surface so that
$y_1,z_1,\dots,y_{\ell(r_i)-1},z_{\ell(r_i)-1}$ go back to their
original position.

Note that the loop $R_i$ is described in $\pi_1(\Sigma_g)$, up to
conjugation, as the following 
\begin{eqnarray}
R_i&=&(\prod_{1\leq{}t\leq{}m(1)}x_{i,1,t}a_{j(1)})\cdots(\prod_{1\leq{}t\leq{}m(\ell(r_i))}x_{i,\ell(r_i),t}a_{j(\ell(r_i))})\widetilde{L_i},
\end{eqnarray}
where $x_{i,s,t}$ is a loop which is some products of
$a_{n+1},b_{n+1},\dots,a_{\ell(r_i)-1},b_{\ell(r_i)-1}$ and $c_{n+1}$,
and $\widetilde{L_i}$ is a loop which is described in $\pi_1(\Sigma_g)$
as the following
$$
\widetilde{L_i}=
\left\{
\begin{array}{ll}
b_{j(\ell(r_i))}^{-1}b_{j(\ell(r_i))-1}^{-1}\cdots{}b_{j(1)+1}^{-1}b_{j(1)}^{-1}&\textrm{when}~j(1)\leq{}j(\ell(r_i)),\\
b_{j(\ell(r_i))+1}b_{j(\ell(r_i))}\cdots{}b_{j(1)}b_{j(1)-1}&\textrm{when}~j(1)>j(\ell(r_i)).
\end{array}
\right.
$$

We now prove Theorem~\ref{1.1}.

\begin{proof}[Proof of Theorem 1.1]
For $g\geq2n+\ell-1$, let $V$ be the following
$$V=UW^{t_{a_{n+1}}}\cdots{}W^{t_{a_{[\frac{g}{2}]}}},$$
where $U=WW^{t_{b_1}}\cdots{}W^{t_{b_g}}$.
In addition, let $V'$ be the following
$$V'=VV^{t_{R_1}}\cdots{}V^{t_{R_k}},$$
where $R_i$ is the loop constructed previously. 
We show that the fundamental group $\pi_1(X_{V'})$ is isomorphic to
 $\Gamma$.

Since each of $b_1,\dots,b_g$ and $a_{n+1},\dots,a_{[\frac{g}{2}]}$
 intersects some $B_i$ transversely at only one point, by
 Proposition~\ref{2.2}, we have
\begin{eqnarray*}
\pi_1(X_V)
&=&
\pi_1(\Sigma_g)/{\la{}b_1,\dots,b_g,a_{n+1},\dots,a_{[\frac{g}{2}]}\ra}\\
&=&
\pi_1(X_{U})/{\la{}a_{n+1},\ldots,a_{[\frac{g}{2}]}\ra}.
\end{eqnarray*}
In addition, by the presentation of (1) of Proposition \ref{3.1}, we
 have 
$$\pi_1(U)=\la{}a_1,\dots,a_{[\frac{g}{2}]}\ra.$$
Therefore we have 
\begin{eqnarray*}
\pi_1(X_V)
&=&
\la{}a_1,\dots,a_{[\frac{g}{2}]}\mid{}a_{n+1},\dots,a_{[\frac{g}{2}]}\ra\\
&=&
\la{}a_1,\dots,a_n\ra,
\end{eqnarray*}
Because of the presentation of $\pi_1(X_U)$ in (1) of
 Proposition~\ref{3.1}, we assume $g\geq2n+\ell-1$ in place of
 $g\geq{}n+\ell-1$.

For any $1\leq{}i\leq{}k$, consider the vanishing cycle
 $((B_0)t_{a_{n+1}})t_{R_i}$ of $X_{V'}$.
Note that $(B_0)t_{a_{n+1}}$ and $(a_{n+1})t_{R_i}$ are described in
 $\pi_1(\Sigma_g)$ as followings
\begin{itemize}
 \item $(B_0)t_{a_{n+1}}=a_{n+1}(b_1\cdots{}b_g)$,
 \item $(a_{n+1})t_{R_i}=a_{n+1}(zR_iz^{-1})$ for some
       $z\in\pi_1(\Sigma_g)$.
\end{itemize}
Then, we have that $((B_0)t_{a_{n+1}})t_{R_i}$ is described in 
$\pi_1(\Sigma_g)$ as the following 
\begin{eqnarray*}
((B_0)t_{a_{n+1}})t_{R_i}
&=&
(x\cdot{}a_{n+1}(b_1\cdots{}b_n)\cdot{}x^{-1})t_{R_i}\\
&=&
(x)t_{R_i}(a_{n+1})t_{R_i}(b_1\cdots{}b_n)t_{R_i}(x^{-1})t_{R_i}\\
&=&
(x)t_{R_i}(y\cdot{}a_{n+1}(zR_iz^{-1})\cdot{}y^{-1})(w\cdot(B_0)t_{R_i}\cdot{}w^{-1})((x)t_{R_i})^{-1},
\end{eqnarray*}
for some elements $x,y$ and $w$ in $\pi_1(\Sigma_g)$.
Since $a_{n+1}=(B_0)t_{R_i}=1$ in $\pi_1(X_{V'})$, we have $R_i=1$ from
 $((B_0)t_{a_{n+1}})t_{R_i}=1$, in $\pi_1(X_{V'})$.
For a vanishing cycle $c$ of $X_V$, if $R_i$ intersects $c$ transversely
 at $s$ points, then the vanishing cycle $(c)t_{R_i}$ of $X_{V'}$ is
 described in $\pi_1(\Sigma_g)$, up to conjugation, as the following
$$(c)t_{R_i}=x_1R_i^{\varepsilon_1}\cdots{}x_sR_i^{\varepsilon_s}x_{s+1},$$ 
where $\varepsilon_j=\pm1$ and  $x_1,\dots,x_{s+1}$ are elements in
$\pi_1(\Sigma_g)$ such that $c=x_1\cdots{}x_{s+1}$.
Since $R_i=1$ and $c=1$ in $\pi_1(X_{V'})$, we can delete the relation
 $(c)t_{R_i}=1$ of $\pi_1(X_{V'})$.
We now define
 $\hat{r}_i=a_{j(1)}^{m(1)}\cdots{}a_{j(\ell(r_i))}^{m(\ell(r_i))}$ for
 $r_i=g_{j(1)}^{m(1)}\cdots{}g_{j(\ell(r_i))}^{m(\ell(r_i))}$.
Since $x_{i,s,t}$ and $\widetilde{L_i}$ in (1) is $1$ in $\pi_1(X_{V'})$,
the natural epimorphism $\pi_1(\Sigma_g)\twoheadrightarrow\pi_1(X_{V'})$
 sends $R_i$ to $\hat{r}_i$.
Note that the vanishing cycles of $X_{V'}$ consist of $c$ and
 $(c)t_{R_i}$ for all vanishing cycles $c$ of $X_V$ and $1\leq{}i\leq{}k$.
Therefore, we have 
\begin{eqnarray*}
\pi_1(X_{V'})
&=&
\la{}a_1,\dots,a_n\mid\hat{r}_1,\dots,\hat{r}_k\ra\\
&\cong&
\Gamma.
\end{eqnarray*}

Thus, the proof of Theorem 1.1 is completed.
\end{proof}

\section{Proof of Theorem~\ref{1.2}}

In this section, we prove Theorem 1.2.

\subsection{Proof of (1) of Theorem~\ref{1.2}}

\begin{figure}[h]
\subfigure[The loop $R_{1,k}$ with $k=2$.]{\includegraphics{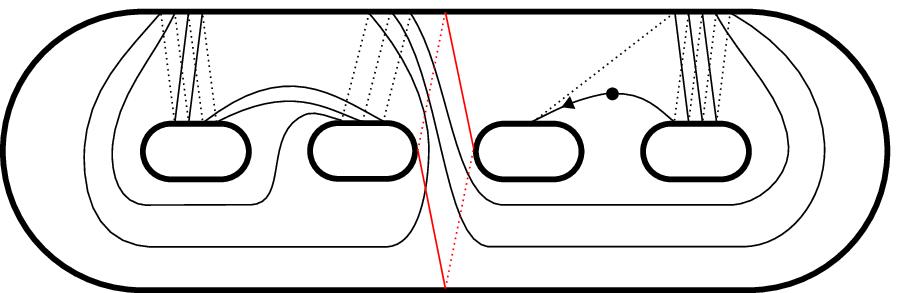}}

\subfigure[The loop $R_2$.]{\includegraphics{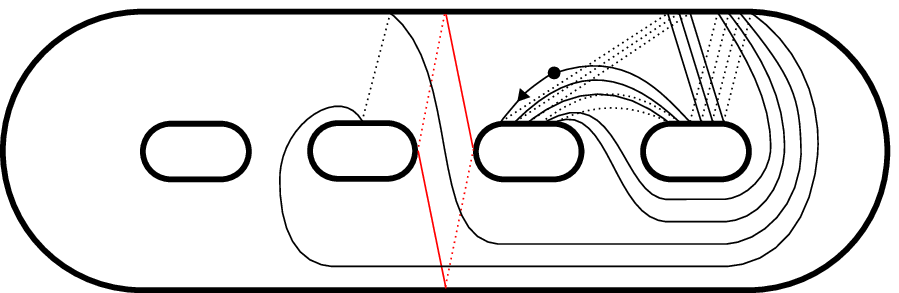}}

\subfigure[The loop $R_3$ with $n=4$.]{\includegraphics{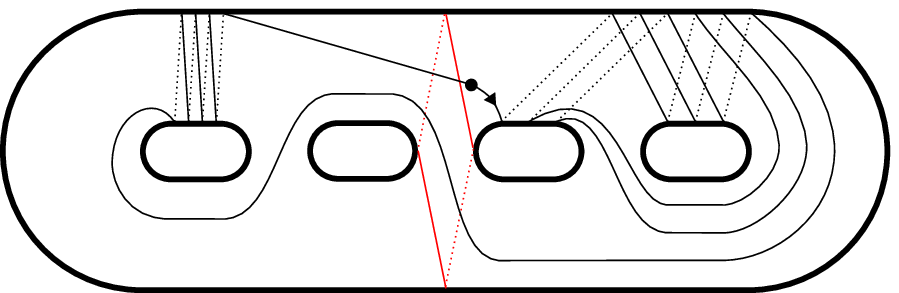}}
\caption{}\label{(1)}
\end{figure}

For $n\geq 2$, let $B_n$ denote the $n$-strands braid group. 
The group $B_n$ has a presentation with generators
$\sigma_1,\dots,\sigma_{n-1}$ and with relations
\begin{itemize}
 \item $\sigma_i\sigma_j\sigma_i^{-1}\sigma_j^{-1}=1$, where
       $1\leq{}i<j-1\leq{}n-2$,
 \item $\sigma_i\sigma_{i+1}\sigma_i\sigma_{i+1}^{-1}\sigma_i^{-1}\sigma_{i+1}^{-1}=1$,
       where $1\leq{}i\leq{}n-2$.
\end{itemize}
Let $x=\sigma_1$ and $y=\sigma_1\cdots\sigma_{n-1}$.
Then $B_n$ can be presented with generators $x,y$ and with relations
\begin{itemize}
 \item $xy^kxy^{-k}x^{-1}y^kx^{-1}y^{-k}=1$, where $2\leq{}k\leq{}n-2$,
 \item $xyxy^{-1}xyx^{-1}y^{-1}x^{-1}yx^{-1}y^{-1}=1$,
 \item $(xy)^{n-1}y^{-n}=1$.
\end{itemize}
A correspondence between the first presentation and the second
presentation is given by $\sigma_i=y^{i-1}xy^{1-i}$ for
$1\leq{}i\leq{}n-1$. 
See \cite{k2} for this presentation.

We now prove (1) of Theorem~\ref{1.2}.

\begin{proof}[Proof of (1) of Theorem~\ref{1.2}]
Since $B_2$ is isomorphic to $\Z$, we have $g(B_2)=1$ from
 Theorem~\ref{5.1} (cf. \cite{k2}).
For $n\geq3$, since $B_n$ is generated by two generators $x,y$, we have
 $g(B_n)\geq2$ from (2) of Theorem~\ref{2.3} (cf. \cite{k2}).
Therefore, we prove $g(B_n)\leq4$ for $n\geq3$.

Let $R_{1,k},R_2$ and $R_3$ be simple closed curves on $\Sigma_4$ as
 shown in Figure~\ref{(1)}, where $2\leq{}k\leq{}n-2$.
Note that $R_{1,k},R_2$ and $R_3$ intersect $B_4$ transversely at only
 one point, for $2\leq{}k\leq{}n-2$.
Loops $R_{1,k},R_2$ and $R_3$ can be described in $\pi_1(\Sigma_4)$, up
 to conjugation, as follows
\begin{itemize}
 \item $R_{1,k}=\3^{-1}\4^{-k}(\8\9)^{-1}\2\1^{-k}(\6)\2^{-1}(\6\7)^{-1}\1^{k}\2^{-1}(\8\9)\4^k$,
       where $2\leq{}k\leq{}n-2$,
 \item $R_2=\3^{-1}\4^{-1}(\9^{-1})\3^{-1}\4\3^{-1}\4^{-1}(\7\8\9)^{-1}\2^{-1}(\8\9)\4\3\4^{-1}\3(\9)\4$, 
 \item $R_3=(\3^{-1}\4^{-1}(\9^{-1}))^{n-1}(\6\8)^{-1}\1^{-n}$.
\end{itemize}
Let $V_1$ be the following:
$$
V_1=WW^{t_{\6}}W^{t_{\7}}W^{t_{\8}}W^{t_{\9}}(\prod_{2\leq{}k\leq{}n-2}W^{t_{R_{1,k}}})W^{t_{R_2}}W^{t_{R_3}}.
$$
Then, from Proposition~\ref{2.2} and (1) of Proposition~\ref{3.1}, the
 fundamental group $\pi_1(X_{V_1})$ can be presented with generators
 $\2,\1$ and with relations
\begin{itemize}
 \item $\2\1^k\2\1^{-k}\2^{-1}\1^k\2^{-1}\1^{-k}=1$, where 
       $2\leq{}k\leq{}n-2$,
 \item $\2\1\2\1^{-1}\2\1\2^{-1}\1^{-1}\2^{-1}\1\2^{-1}\1^{-1}=1$,
 \item $(\2\1)^{n-1}\1^{-n}=1$.
\end{itemize}
Let $\2=x$ and $\1=y$.
Then it follows that $\pi_1(X_{V_1})$ is isomorphic to $B_n$. 
Therefore, for $n\geq3$ we have $g(B_n)\leq4$. 

Thus, the proof of (1) of Theorem~\ref{1.2} is completed.
\end{proof}

\subsection{Proof of (2) of Theorem~\ref{1.2}}

\begin{figure}[h]
\subfigure[The loop $R_4$.]{\includegraphics{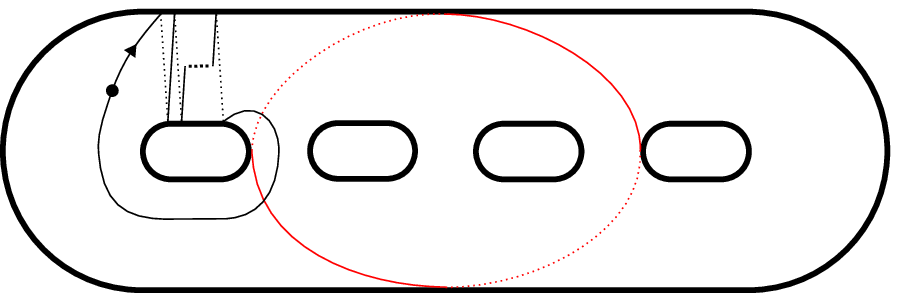}}

\subfigure[The loop $R_5$.]{\includegraphics{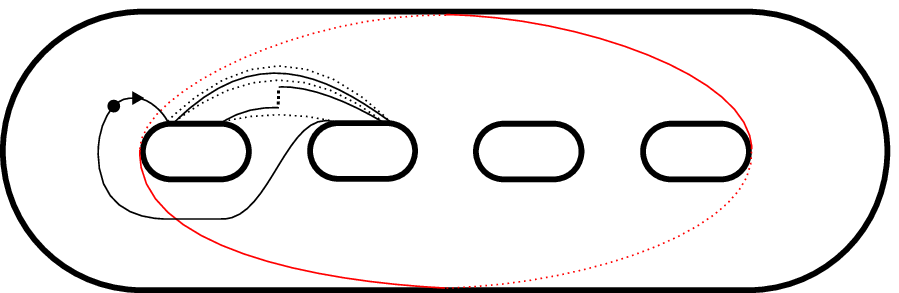}}

\subfigure[The loop $R_6$.]{\includegraphics{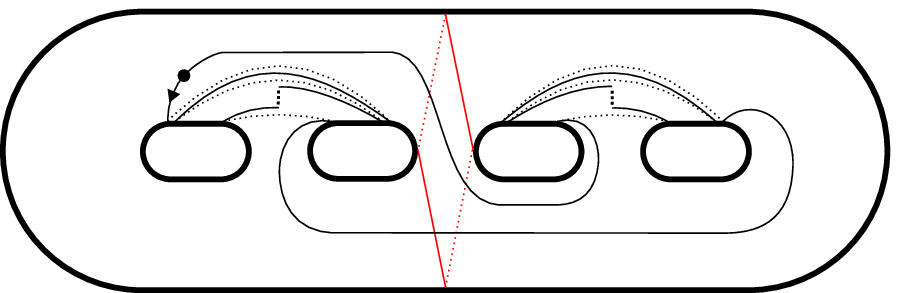}}
\caption{}\label{(2)}
\end{figure}

For $g\geq1$, let $\Hy_g$ be the hyperelliptic mapping class group of 
$\Sigma_g$, that is, a subgroup of the mapping class group $\M_g$ which
consists of elements commutative with a hyperelliptic involution.
It is well known that there is the natural epimorphism
$B_{2g+2}\twoheadrightarrow\Hy_g$. 
For $g\geq2$, Birman and Hilden \cite{bh} gave a presentation of the
group $\Hy_g$ with generators $\sigma_1,\dots,\sigma_{2g+1}$ and with
relations
\begin{itemize}
 \item $\sigma_i\sigma_j\sigma_i^{-1}\sigma_j^{-1}=1$, where
       $1\leq{}i<j-1\leq2g$,
 \item $\sigma_i\sigma_{i+1}\sigma_i\sigma_{i+1}^{-1}\sigma_i^{-1}\sigma_{i+1}^{-1}=1$,
       where $1\leq{}i\leq2g$,
 \item $(\sigma_1\cdots\sigma_{2g+1})^{2g+2}=1$,
 \item $(\sigma_1\cdots\sigma_{2g+1}\sigma_{2g+1}\cdots\sigma_1)^2=1$,
 \item $[\sigma_1\cdots\sigma_{2g+1}\sigma_{2g+1}\cdots\sigma_1,\sigma_1]=1$.
\end{itemize}

Similarly to Subsection~4.1, let $x=\sigma_1$ and
$y=\sigma_1\cdots\sigma_{2g+1}$.
Then, note that $y^{2g+2}=1$.
We calculate 
\begin{eqnarray*}
\sigma_1\cdots\sigma_{2g+1}\sigma_{2g+1}\cdots\sigma_1
&=&
y(y^{2g}xy^{-2g})\cdots(yxy^{-1})x\\
&=&
y^{2g+1}(xy^{-1})^{2g}x\\
&=&
y^{-1}(xy^{-1})^{2g}x\\
&=&
(y^{-1}x)^{2g+1}.
\end{eqnarray*}
Then we have
$(\sigma_1\cdots\sigma_{2g+1}\sigma_{2g+1}\cdots\sigma_1)^2=(y^{-1}x)^{4g+2}$.
In addition, we have
\begin{eqnarray*}
[\sigma_1\cdots\sigma_{2g+1}\sigma_{2g+1}\cdots\sigma_1,\sigma_1]
&=&
(y^{-1}x)^{2g+1}x(x^{-1}y)^{2g+1}x^{-1}\\
&=&
(y^{-1}x)^{2g+1}(yx^{-1})^{2g+1}.
\end{eqnarray*}
Therefore, $\Hy_g$ can be presented with generators $x,y$ and with
relations
\begin{itemize}
 \item $xy^kxy^{-k}x^{-1}y^kx^{-1}y^{-k}=1$, where $2\leq{}k\leq2g$,
 \item $xyxy^{-1}xyx^{-1}y^{-1}x^{-1}yx^{-1}y^{-1}=1$,
 \item $(xy)^{2g+1}y^{-2g-2}=1$,
 \item $y^{2g+2}=1$,
 \item $(y^{-1}x)^{4g+2}=1$,
 \item $(y^{-1}x)^{2g+1}(yx^{-1})^{2g+1}=1$.
\end{itemize}

We now prove (2) of Theorem~\ref{1.2}.

\begin{proof}[Proof of (2) of Theorem~\ref{1.2}]
For $g\geq1$, since $\Hy_g$ is generated by two generators $x,y$, we
 have $g(\Hy_g)\geq2$ from (2) of Theorem~\ref{2.3} (cf. \cite{k2}).
Therefore, we prove $g(\Hy_g)\leq4$ for $g\geq1$.

Let $R_4,R_5$ and $R_6$ be simple closed curves on $\Sigma_4$ as shown
 in Figure~\ref{(2)}.
Note that $R_4,R_5$ and $R_6$ intersect $B_2,B_1$  and $B_4$
 transversely at only one point, respectively.
Loops $R_4,R_5$ and $R_6$ can be described in $\pi_1(\Sigma_4)$, up to
 conjugation, as follows
\begin{itemize}
 \item $R_4=\1^{2g+2}(\6^{-1})$,
 \item $R_5=(\1^{-1}\2)^{4g+2}(\6^{-1})$,
 \item $R_6=(\1^{-1}\2)^{2g+1}(\7\8\9)(\4^{-1}\3)^{2g+1}(\8^{-1})$.
\end{itemize}
Let $V_2$ be the following:
$$
V_2=WW^{t_{\6}}W^{t_{\7}}W^{t_{\8}}W^{t_{\9}}(\prod_{2\leq{}k\leq2g}W^{t_{R_{1,k}}})W^{t_{R_2}}W^{t_{R_3}}W^{t_{R_4}}W^{t_{R_5}}W^{t_{R_6}}.
$$ 
Then, from Proposition~\ref{2.2} and (1) of Proposition~\ref{3.1}, the
 fundamental group $\pi_1(X_{V_2})$ can be presented with generators
 $\2,\1$ and with relations
\begin{itemize}
 \item $\2\1^k\2\1^{-k}\2^{-1}\1^k\2^{-1}\1^{-k}=1$, where
       $2\leq{}k\leq2g$,
 \item $\2\1\2\1^{-1}\2\1\2^{-1}\1^{-1}\2^{-1}\1\2^{-1}\1^{-1}=1$,
 \item $(\2\1)^{2g+1}\1^{-2g-2}=1$,
 \item $\1^{2g+2}=1$,
 \item $(\1^{-1}\2)^{4g+2}=1$,
 \item $(\1^{-1}\2)^{2g+1}(\1\2^{-1})^{2g+1}=1$.
\end{itemize}
Let $\2=x$ and $\1=y$.
Then it follows that $\pi_1(X_{V_2})$ is isomorphic to $\Hy_g$.
Therefore, for $g\geq2$ we have $g(\Hy_g)\leq4$.
In particular, since the group $\Hy_1$ is isomorphic to $\M_1$, we have
 $2\leq{}g(\Hy_1)\leq4$ from (3) of Theorem~\ref{2.3} (cf. \cite{k2}).

Thus, the proof of (2) of Theorem~\ref{1.2} is completed.
\end{proof}

\subsection{Proof of (3) of Theorem~\ref{1.2}}

\begin{figure}[h]
\subfigure[The loop $R_7$.]{\includegraphics{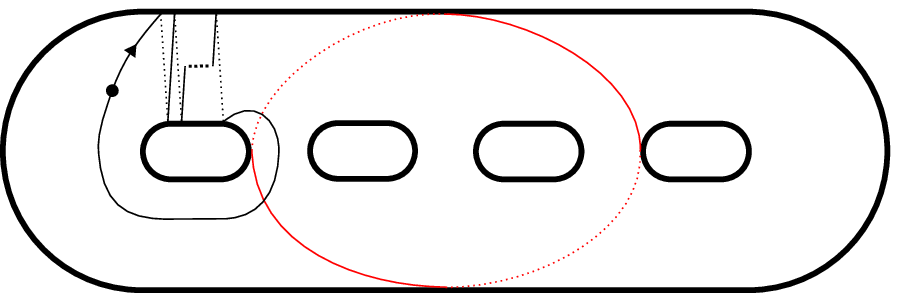}}

\subfigure[The loop $R_8$.]{\includegraphics{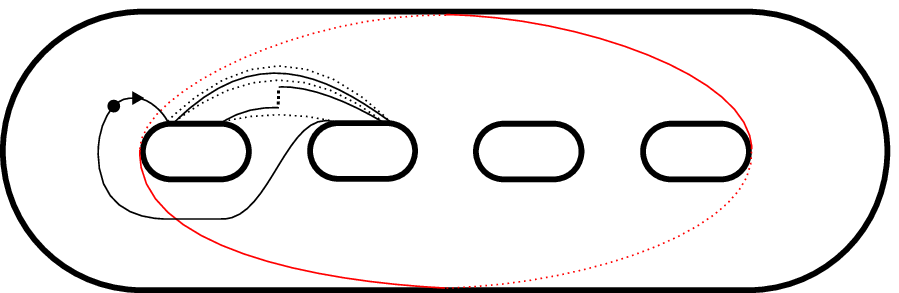}}
\caption{}\label{(3)}
\end{figure}

For $n\geq2$, let $\M_{0,n}$ denote the mapping class group of an
$n$-punctured sphere, that is, the group of isotopy classes of
orientation-preserving diffeomorphisms
$S^2\setminus\{p_1,\dots,p_n\}\to{}S^2\setminus\{p_1,\dots,p_n\}$.
Magnus \cite{m} gave a presentation of the group $\M_{0,n}$ with
generators $\sigma_1,\dots,\sigma_{n-1}$ and with relations
\begin{itemize}
 \item $\sigma_i\sigma_j\sigma_i^{-1}\sigma_j^{-1}=1$, where
       $1\leq{}i<j-1\leq{}n-2$,
 \item $\sigma_i\sigma_{i+1}\sigma_i\sigma_{i+1}^{-1}\sigma_i^{-1}\sigma_{i+1}^{-1}=1$,
       where $1\leq{}i\leq{}n-2$,
 \item $(\sigma_1\cdots\sigma_{n-1})^{n}=1$,
 \item $\sigma_1\cdots\sigma_{n-1}\sigma_{n-1}\cdots\sigma_1=1$.
\end{itemize}
Similarly to Subsection~4.1 and 4.2, let $x=\sigma_1$ and
$y=\sigma_1\cdots\sigma_{n-1}$.
Then $\M_{0,n}$ can be presented with generators $x,y$ and with relations
\begin{itemize}
 \item $xy^kxy^{-k}x^{-1}y^k x^{-1}y^{-k}=1$, where $2\leq{}k\leq{}n-2$,
 \item $xyxy^{-1}xyx^{-1}y^{-1}x^{-1}yx^{-1}y^{-1}=1$,
 \item $(xy)^{n-1}y^{-n}=1$,
 \item $y^n=1$,
 \item $(y^{-1}x)^{n-1}=1$.
\end{itemize}

We now prove (3) of Theorem~\ref{1.2}.

\begin{proof}[Proof of (3) of Theorem~\ref{1.2}]
Since the group $\M_{0,2}$ is isomorphic to $\Z_2$, we have
 $g(\M_{0,2})=2$ from Theorem~\ref{5.1} (cf. \cite{k2}).
For $n\geq3$, since $\M_{0,n}$ is generated by two generators $x,y$, we
 have $g(\M_{0,n})\geq2$ from (2) of Theorem~\ref{2.3} (cf. \cite{k2}).
Therefore, we prove $g(\M_{0,n})\leq4$ for $n\geq3$.

Let $R_7$ and $R_8$ be simple closed curves on $\Sigma_4$ as shown in
 Figure~\ref{(3)}.
Note that $R_7$ and $R_8$ intersect $B_2$ and $B_1$ transversely at only
 one point, respectively.
Loops $R_7$ and $R_8$ can be described in $\pi_1(\Sigma_4)$, up to
 conjugation, as follows
\begin{itemize}
 \item $R_7=\1^n(\6^{-1})$,
 \item $R_8=(\1^{-1}\2)^{n-1}(\6^{-1})$.
\end{itemize}
Let $V_3$ be the following:
$$V_3=V_1W^{t_{R_7}}W^{t_{R_8}}.$$
Then, from Proposition~\ref{2.2} and (1) of Proposition~\ref{3.1}, the
 fundamental group $\pi_1(X_{V_3})$ can be presented with generators
 $\2,\1$ and with relations
\begin{itemize}
 \item $\2\1^k\2\1^{-k}\2^{-1}\1^k\2^{-1}\1^{-k}=1$, where
       $2\leq{}k\leq{}n-2$,
 \item $\2\1\2\1^{-1}\2\1\2^{-1}\1^{-1}\2^{-1}\1\2^{-1}\1^{-1}=1$,
 \item $(\2\1)^{n-1}\1^{-n}=1$,
 \item $\1^n=1$,
 \item $(\1^{-1}\2)^{n-1}=1$. 
\end{itemize}
Let $\2=x$ and $\1=y$.
Then it follows that $\pi_1(X_{V_3})$ is isomorphic to $\M_{0,n}$.
Therefore, for $n\geq2$ we have $g(\M_{0,n})\leq4$. 

Thus, the proof of (3) of Theorem~\ref{1.2} is completed.
\end{proof}

\subsection{Proof of (4) of Theorem~\ref{1.2}}

\begin{figure}[h]
\includegraphics{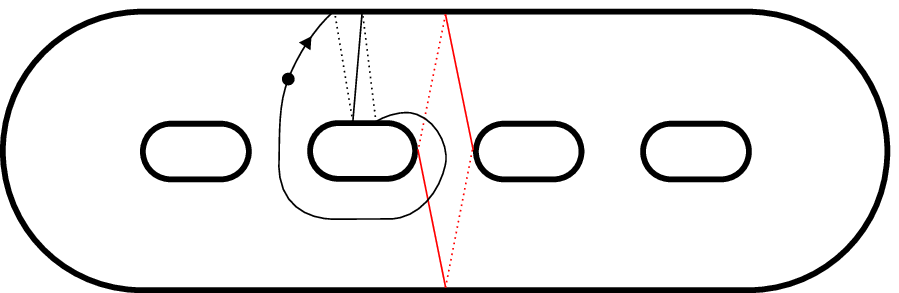}
\caption{The loop $R_9$.}\label{(4)}
\end{figure}

For $n\geq2$, let $S_n$ denote the $n$-symmetric group.
It is well known that the group $S_n$ has a presentation with generators
$\sigma_1,\dots,\sigma_{n-1}$ and with relations
\begin{itemize}
 \item $\sigma_i\sigma_j\sigma_i^{-1}\sigma_j^{-1}=1$, where
       $1\leq{}i<j-1\leq{}n-2$,
 \item $\sigma_i\sigma_{i+1}\sigma_i\sigma_{i+1}^{-1}\sigma_i^{-1}\sigma_{i+1}^{-1}=1$,
       where $1\leq{}i\leq{}n-2$,
 \item $\sigma_i^2=1$, where $1\leq{}i\leq{}n-1$.
\end{itemize}
Similarly to Subsection~4.1, let $x=\sigma_1$ and
$y=\sigma_1\cdots\sigma_{n-1}$.
Since $\sigma_i=y^{i-1}xy^{1-i}$, $\sigma_i^2=1$ if and only if $x^2=1$.
Therefore $S_n$ can be presented with generators $x,y$ and with relations
\begin{itemize}
 \item $xy^kxy^{-k}x^{-1}y^kx^{-1}y^{-k}=1$, where $2\leq{}k\leq{}n-2$,
 \item $xyxy^{-1}xyx^{-1}y^{-1}x^{-1}yx^{-1}y^{-1}=1$,
 \item $(xy)^{n-1}y^{-n}=1$,
 \item $x^2 =1$.
\end{itemize}

We now prove (4) of Theorem~\ref{1.2}.

\begin{proof}[Proof of (4) of Theorem~\ref{1.2}]
Since the group $S_2$ is isomorphic to $\Z_2$, we have $g(S_2)=2$ from
 Theorem~\ref{5.1} (cf. \cite{k2}).
For $n\geq3$, since $S_n$ is generated by two generators $x,y$, we have
 $g(S_n)\geq2$ from (2) of Theorem~\ref{2.3} (cf. \cite{k2}).
Therefore, we prove $g(S_n)\leq4$ for $n\geq3$.

Let $R_9$ be the simple closed curve on $\Sigma_4$ as shown in
 Figure~\ref{(4)}.
Note that $R_9$ intersects $B_4$ transversely at only one point.
The loop $R_9$ can be described in $\pi_1(\Sigma_4)$, up to conjugation,
 as follows
\begin{itemize}
 \item $R_9=\2^2(\7^{-1})$.
\end{itemize}
Let $V_4$ be the following:
$$V_4=V_1W^{t_{R_9}}.$$
Then, from Proposition~\ref{2.2} and (1) of Proposition~\ref{3.1}, the
 fundamental group $\pi_1(X_{V_4})$ can be presented with generators
 $\2,\1$ and with relations
\begin{itemize}
 \item $\2\1^k\2\1^{-k}\2^{-1}\1^k\2^{-1}\1^{-k}=1$, where
       $2\leq{}k\leq{}n-2$,
 \item $\2\1\2\1^{-1}\2\1\2^{-1}\1^{-1}\2^{-1}\1\2^{-1}\1^{-1}=1$,
 \item $(\2\1)^{n-1}\1^{-n}=1$,
 \item $\2^{2}=1$.
\end{itemize}
Let $\2=x$ and $\1=y$.
Then it follows that $\pi_1(X_{V_4})$ is isomorphic to $S_n$.
Therefore, for $n\geq2$ we have $g(S_n)\leq4$.

Thus, the proof of (4) of Theorem~\ref{1.2} is completed.
\end{proof}

\subsection{Proof of (5) of Theorem 1.2}

\begin{figure}
\subfigure[The loop $R_{1,k}$ with $k=2$.]{\includegraphics{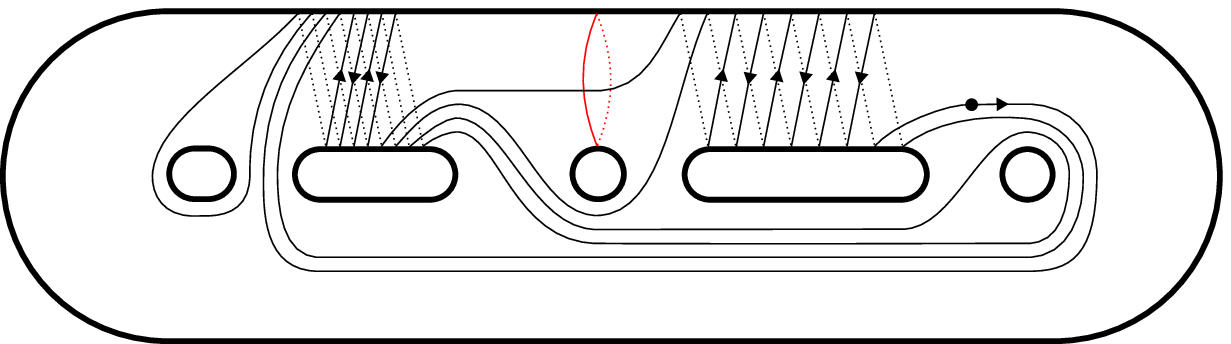}}

\subfigure[The loop $R_2$.]{\includegraphics{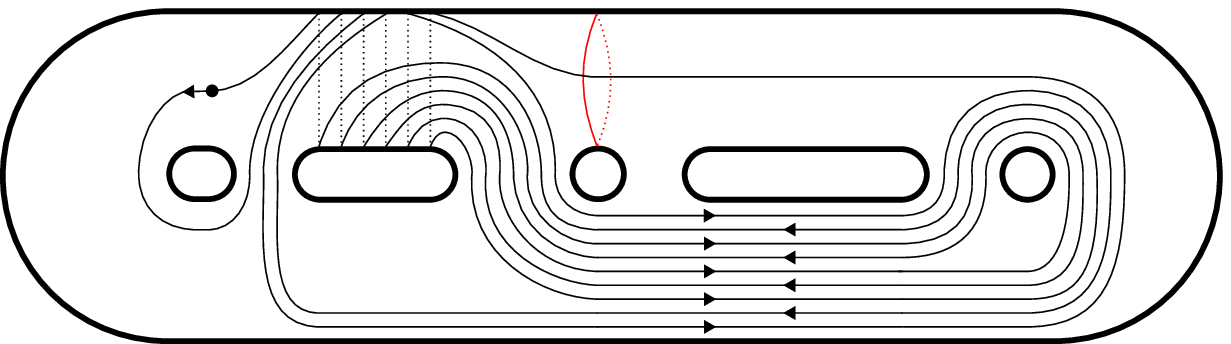}}

\subfigure[The loop $R_3$ with $n=3$.]{\includegraphics{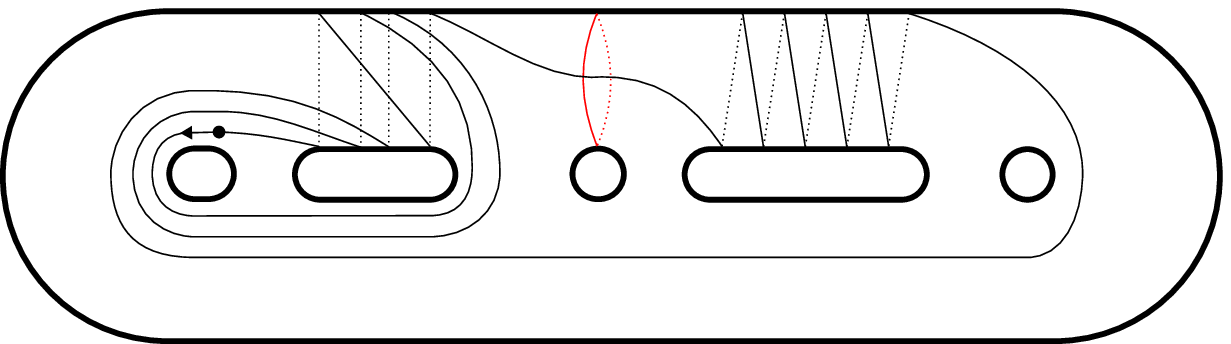}}

\subfigure[The loop $R_4$.]{\includegraphics{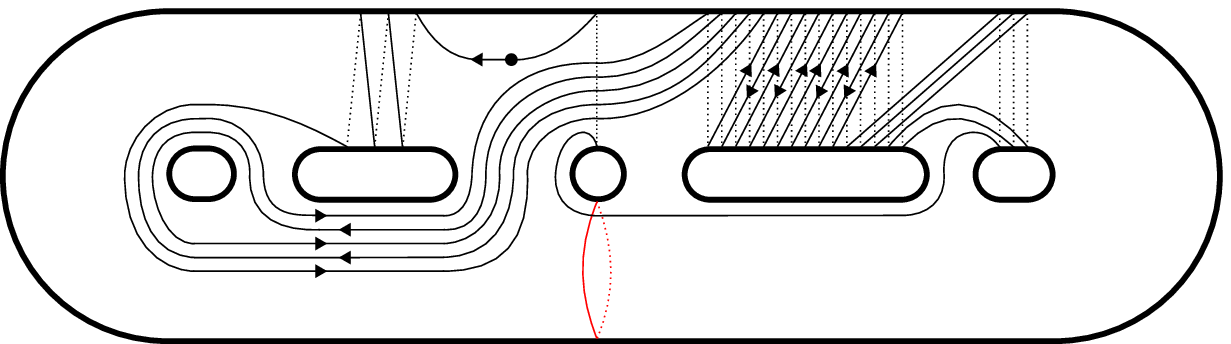}}

\subfigure[The loop $R_{5,i}$ with $i=3$.]{\includegraphics{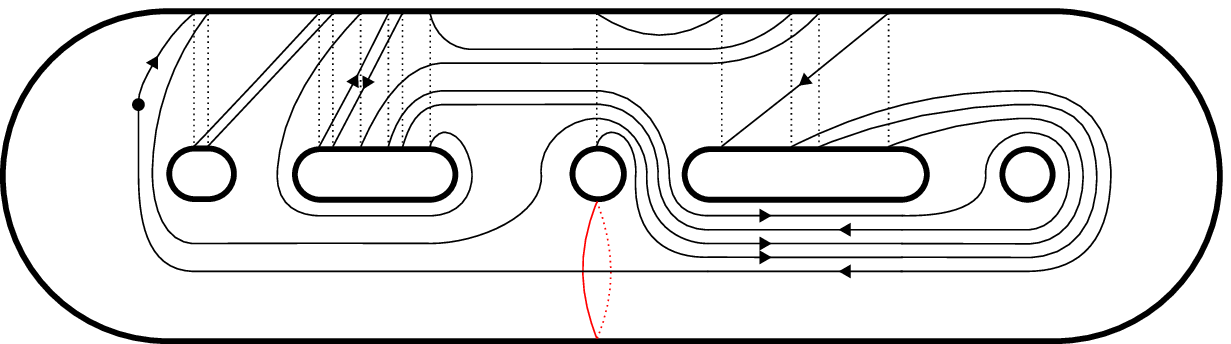}}
\caption{}\label{(5)}
\end{figure}

The Artin group is introduced by \cite{b}.
For $n\geq5$, the $n$-Artin group $\A_n$ associated to the Dynkin
diagram shown in Figure~\ref{artin} is defined by a presentation with
generators $\sigma_1,\dots,\sigma_{n-1},\tau$ and with relations
\begin{itemize}
 \item $\sigma_i\sigma_j\sigma_i^{-1}\sigma_j^{-1}=1$, where
       $1\leq{}i<j-1\leq{}n-2$,
 \item $\sigma_i\sigma_{i+1}\sigma_i\sigma_{i+1}^{-1}\sigma_i^{-1}\sigma_{i+1}^{-1}=1$,
       where $1\leq{}i\leq{}n-2$,
 \item $\sigma_4\tau\sigma_4\tau^{-1}\sigma_4^{-1}\tau^{-1}=1$,
 \item $\tau\sigma_i\tau^{-1}\sigma_i^{-1}=1$, where $1\leq{}i\leq{}n-1$
       with $i\neq4$.
\end{itemize}
It is known that there is the natural epimorphism
$\A_{2g+1}\twoheadrightarrow\M_g$.
Similarly to Subsection~4.1, let $x=\sigma_1$ and
$y=\sigma_1\cdots\sigma_{n-1}$.
In addition, let $z=\tau$.
Then the group $\A_n$ can be presented with generators $x,y,z$ and with
relations
\begin{itemize}
 \item $xy^kxy^{-k}x^{-1}y^kx^{-1}y^{-k}=1$, where $2\leq{}k\leq{}n-2$,
 \item $xyxy^{-1}xyx^{-1}y^{-1}x^{-1}yx^{-1}y^{-1}=1$,
 \item $(xy)^{n-1}y^{-n}=1$,
 \item $(y^3xy^{-3})z(y^3xy^{-3})z^{-1}(y^3x^{-1}y^{-3})z^{-1}=1$,
 \item $z(y^{i-1}xy^{1-i})z^{-1}(y^{i-1}x^{-1}y^{1-i})=1$, where
       $1\leq{}i\leq{}n-1$ with $i\neq4$.
\end{itemize}

We now prove (5) of Theorem~\ref{1.2}.

\begin{proof}[Proof of (5) of Theorem 1.2]
Since $\A _n $ is generated by three generators $x,y$ and $z$, we have
 $g(\A_n)\geq2$ from (2) of Theorem~\ref{2.3} (cf. \cite{k2}).
Therefore, we prove $g(\A_n)\leq5$.

Let $R_{1,k},R_2,R_3,R_4$ and $R_{5,i}$ be simple closed curves on
 $\Sigma_5$ as shown in Figure~\ref{(5)}, where $2\leq{}k\leq{}n-2$ and
 $2\leq{}i\leq{}n-1$ with $i\neq4$.
Note that we can not consider the loop $R_{5,1}$.
Note that $R_{1,k},R_2$ and $R_3$ intersect $a$ transversely at only one
 point, for $2\leq{}k\leq{}n-2$, and that $R_4$ and $R_{5,i}$ intersect
 $b$ transversely at only one point, for $2\leq{}i\leq{}n-1$ with
 $i\neq4$.
Loops $R_{1,k},R_2,R_3,R_4$ and $R_{5,i}$ can be described in
 $\pi_1(\Sigma_5)$, up to conjugation, as follows
\begin{itemize}
 \item $R_{1,k}=\0^{-1}(\7\8\9)^{-1}\2^k(\8\9)\0^{-1}(\8\9)^{-1}\2^{-k}(\7\8\9)\0\4^{-2k}(\8^{-1})\2^{-k}\6^{-1}\2^k\4^{2k}$,
       where $2\leq{}k\leq{}n-2$,
 \item $R_2=\6\2(\8\9)\0^{-1}(\8\9)^{-1}\2^{-1}(\8\9)\0^{-1}(\7\8\9)^{-1}\2(\8\9)\0(\8\9)^{-1}\\ 
       \hspace{0.9cm}
       \2^{-1}(\7\8\9)\0\2(\8\9)\0(\8\9)^{-1}\2^{-1}$,
 \item $R_3=(\6(\7)\2)^{n-1}(\6(\7\8\9)\0)\4^{n+2}\2^2$,
 \item $R_4=\2^3\6(\7)\4^3\5^{-1}\4^{-3}(\7^{-1})\6(\7)\4^3\5\4^{-3}(\7^{-1})\6^{-1}(\7)\4^3\5(\3\8\9)^{-1}$,
 \item $R_{5,i}=\1\2^{i-1}(\9)\0^{-1}(\9)\2^{1-i}\1^{-1}(\6(\7\9)\0)\4^{1-i}(\3\9)\0(\4^{2-i}\2^{2-i}(\7))\2^{-1}\4^{i-2}(\6(\7\8\9)\0)^{-1}$,
       where $2\leq{}i\leq{}n-1$ with $i\neq4$.
\end{itemize}
Let $V_5$ be the following:
$$
V_5=WW^{t_{\7}}W^{t_{\8}}W^{t_{\9}}(\prod_{2\leq{}k\leq{}n-2}W^{t_{R_{1,k}}})W^{t_{R_2}}W^{t_{R_3}}W^{t_{R_4}}(\prod_{2\leq{}i\leq{}n-1,i\neq4}W^{t_{R_{5,i}}}).
$$
Then, from Proposition~\ref{2.2} and (2) of Proposition~\ref{3.1}, the
 fundamental group $\pi_1(X_{V_5})$ can be presented with generators
 $\6,\2,\1$ and with relations
\begin{itemize}
 \item $\6\2^k\6\2^{-k}\6^{-1}\2^k\6^{-1}\2^{-k}=1$, where
       $2\leq{}k\leq{}n-2$,
 \item $\6\2\6\2^{-1}\6\2\6^{-1}\2^{-1}\6^{-1}\2\6^{-1}\2^{-1}=1$,
 \item $(\6\2)^{n-1}\2^{-n}=1$,
 \item $(\2^3\6\2^{-3})\1(\2^3\6\2^{-3})\1^{-1}(\2^3\6^{-1}\2^{-3})\1^{-1}=1$,
 \item $\1(\2^{i-1}\6\2^{1-i})\1^{-1}(\2^{i-1}\6^{-1}\2^{1-i})=1$, where
       $2\leq{}i\leq{}n-1$ with $i\neq4$,
 \item $\1\6\1^{-1}\6^{-1}$.
\end{itemize}
Let $\6=x,\2=y$ and $\1=z$.
Then $\pi_1(X_{V_5})$ is isomorphic to $\A_n$. 
Therefore, for $n\geq5$ we have $g(\A_n)\leq5$.

Thus, the proof of (5) of Theorem~\ref{1.2} is completed.
\end{proof}

\subsection{Proof of (6) of Theorem~\ref{1.2}}

\begin{figure}
\subfigure[The loop $A_{i,j}$, $1\leq i<j\leq r$.]{\includegraphics{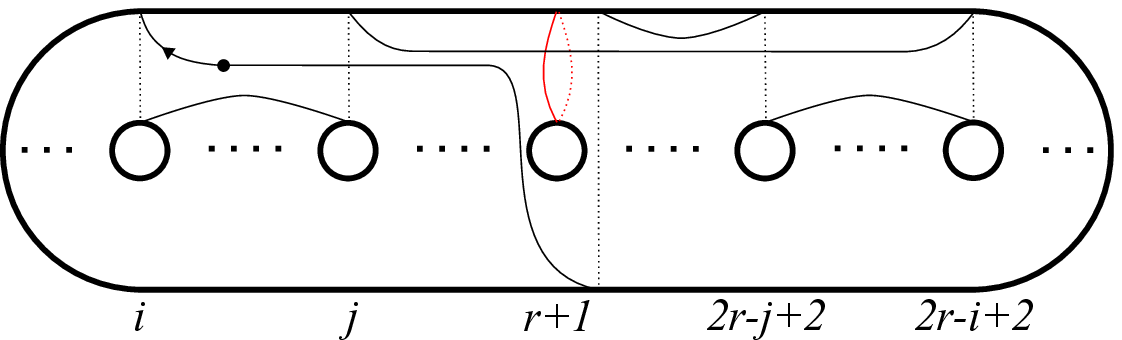}}

\subfigure[The loop $B_{i,j}$, $1\leq i<j\leq r$.]{\includegraphics{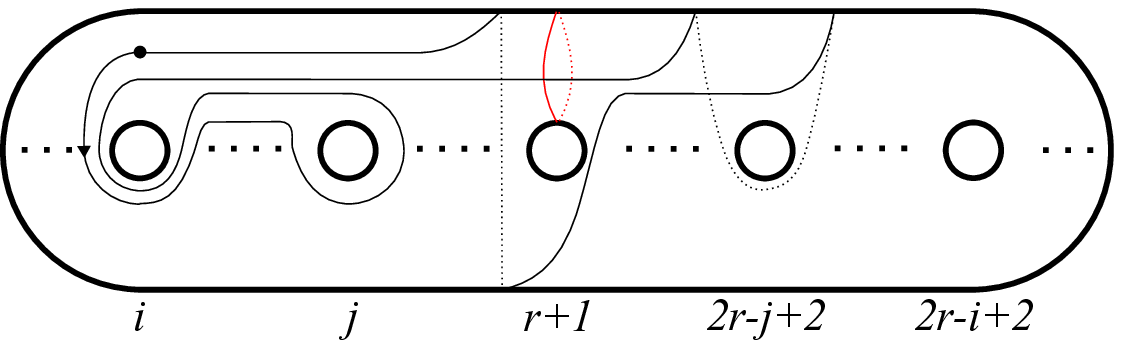}}

\subfigure[The loop $C_{i,j}$, $1\leq i<j\leq r$.]{\includegraphics{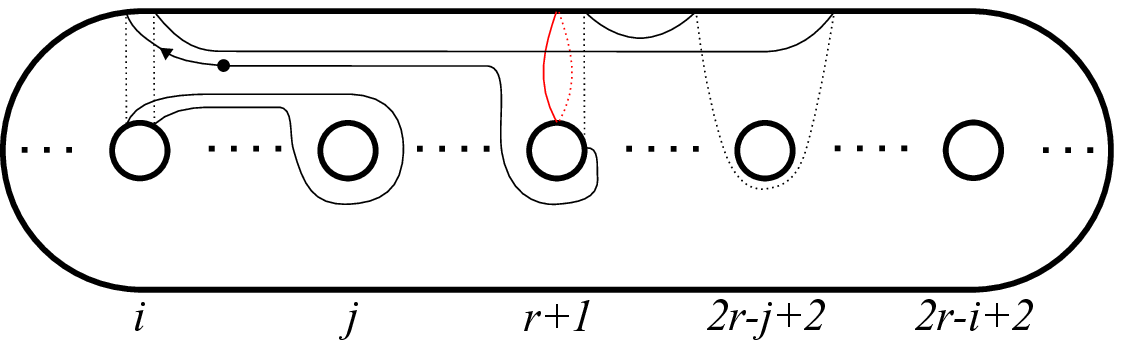}}

\subfigure[The loop $C_{i,j}$, $1\leq j<i\leq r$.]{\includegraphics{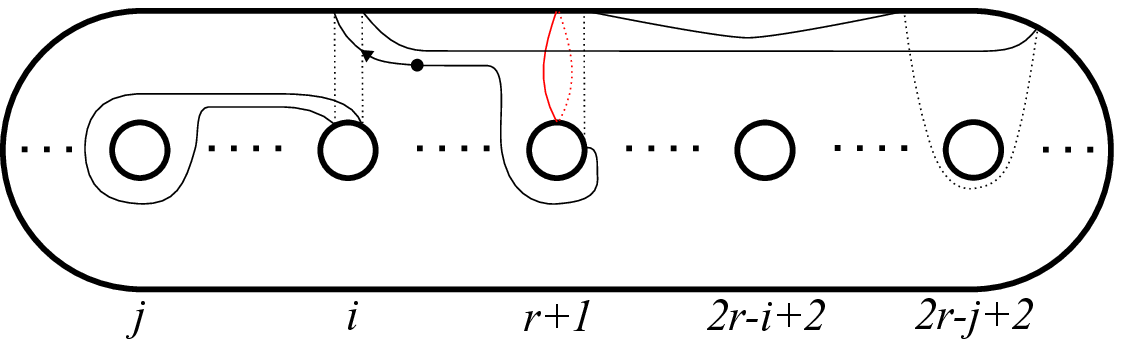}}

\subfigure[The loop $C_{i,i}$, $1\leq i\leq r$.]{\includegraphics{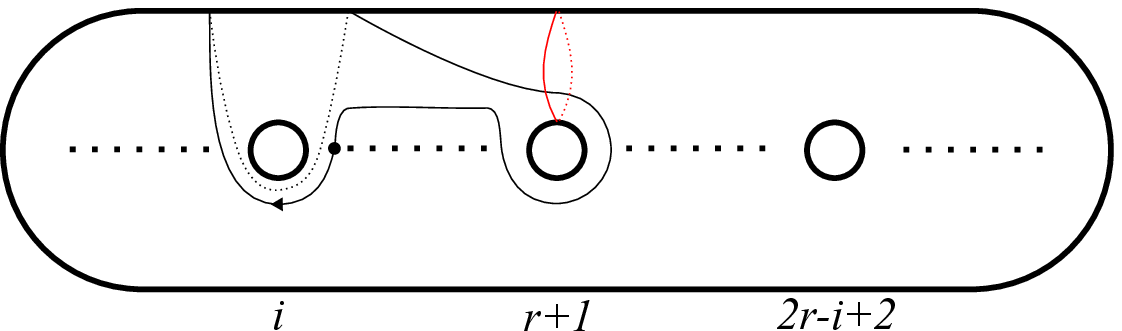}}
\caption{}\label{(6)1}
\end{figure}

\begin{figure}
\subfigure[The loop $R_i^{m_i}$ with $m_i=2$, $1\leq i\leq r$.]{\includegraphics{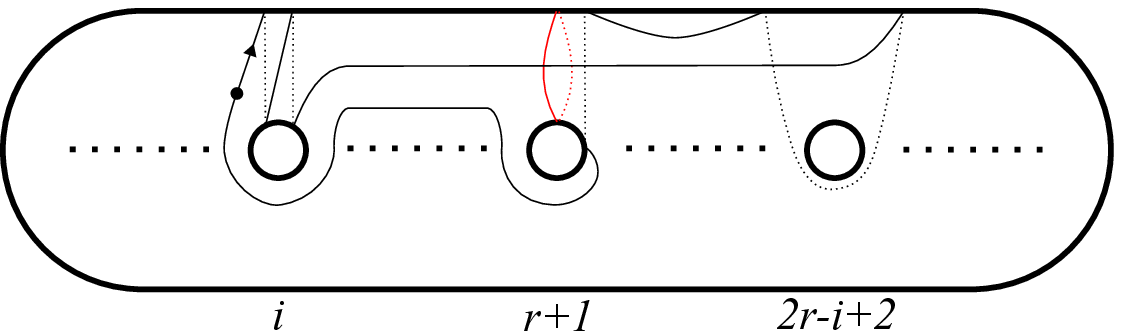}}

\subfigure[The loop $R_{r+i}^{m_{r+i}}$ with $m_{r+i}=2$, $1\leq i\leq r$.]{\includegraphics{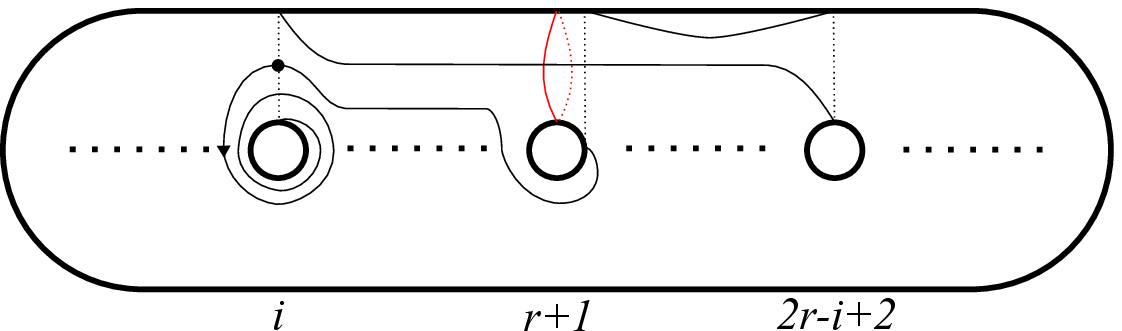}}
\caption{}\label{(6)2}
\end{figure}

\begin{figure}
\subfigure[The loop $A_{i,j}$, $1\leq i<j\leq r$.]{\includegraphics{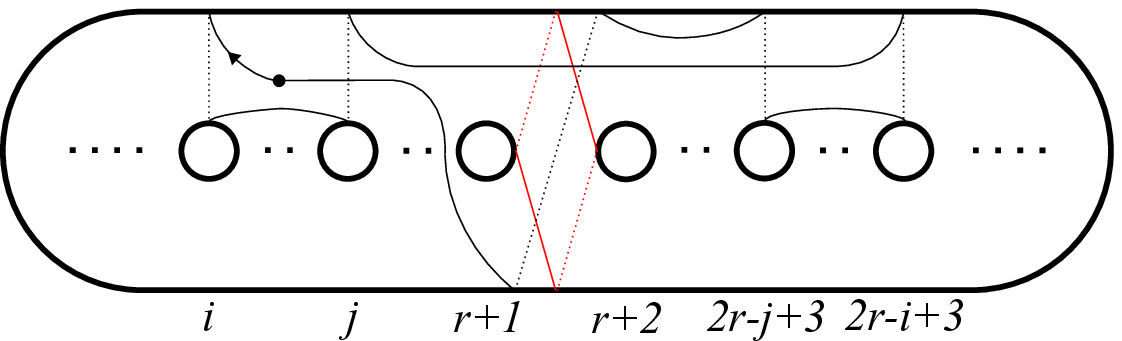}}

\subfigure[The loop $B_{i,j}$, $1\leq i<j\leq r$.]{\includegraphics{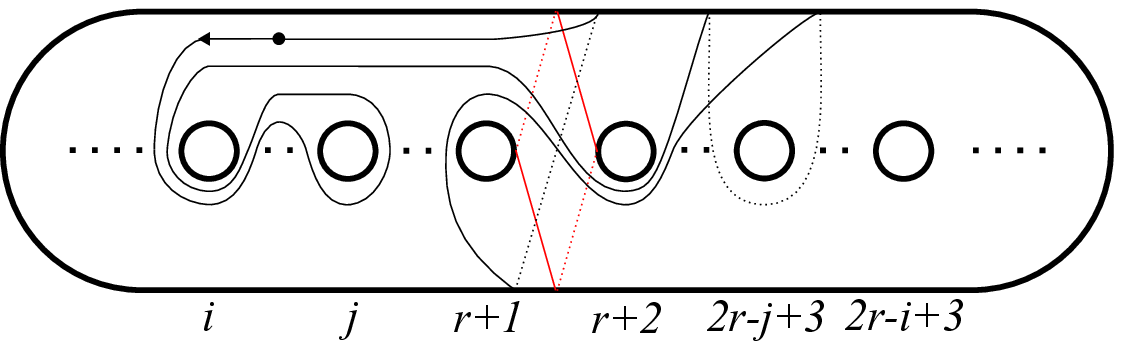}}

\subfigure[The loop $C_{i,j}$, $1\leq i<j\leq r$.]{\includegraphics{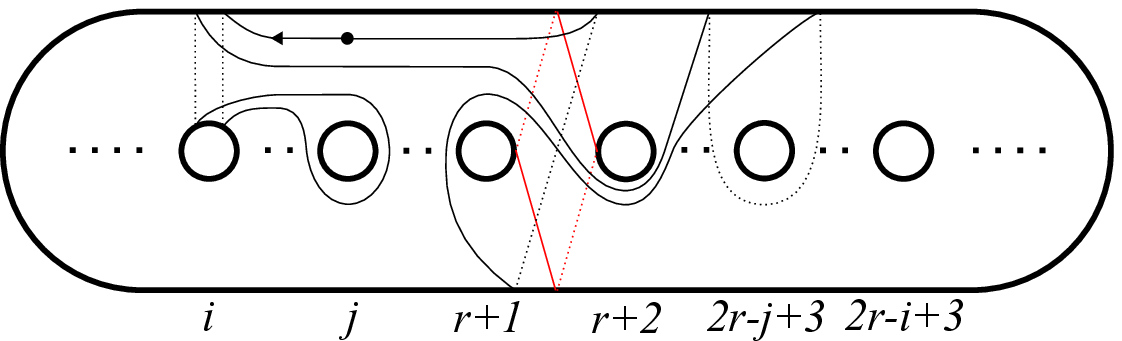}}

\subfigure[The loop $C_{i,j}$, $1\leq j<i\leq r$.]{\includegraphics{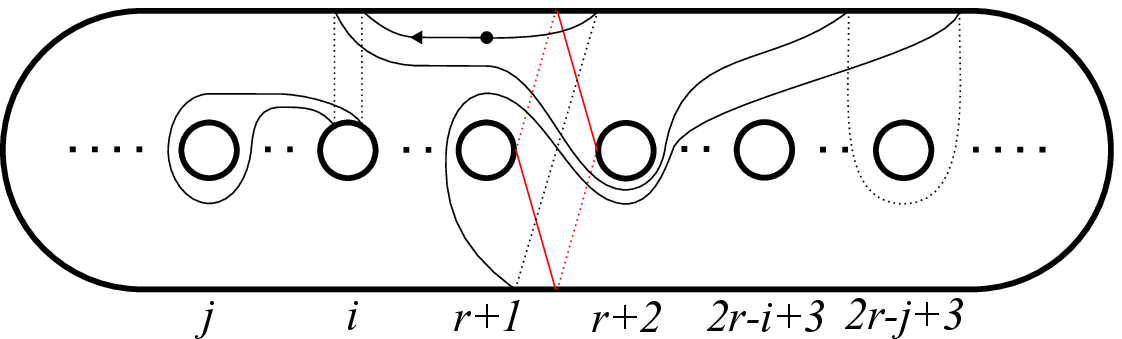}}

\subfigure[The loop $C_{i,i}$, $1\leq i\leq r$.]{\includegraphics{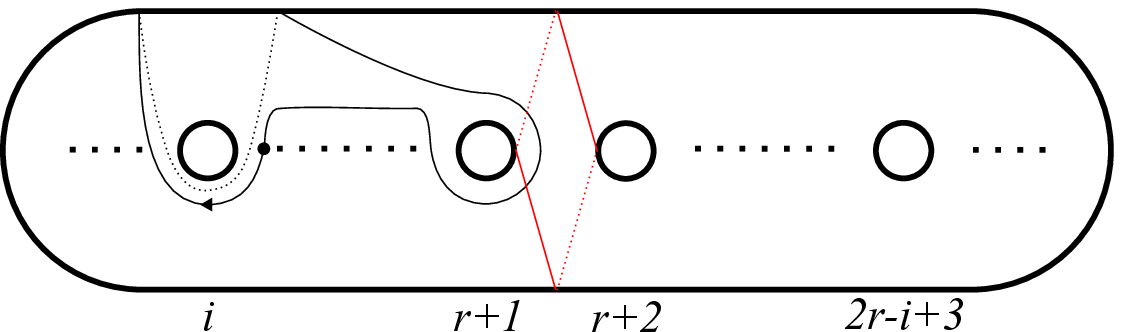}}
\caption{}\label{(6)3}
\end{figure}

\begin{figure}
\subfigure[The loop $A_{i,r+1}$, $1\leq i\leq r$.]{\includegraphics{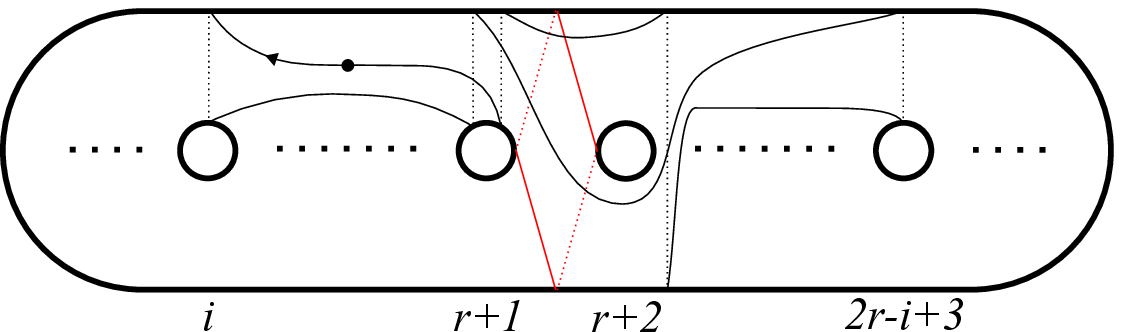}}

\subfigure[The loop $C_{r+1,i}$, $1\leq i\leq r$.]{\includegraphics{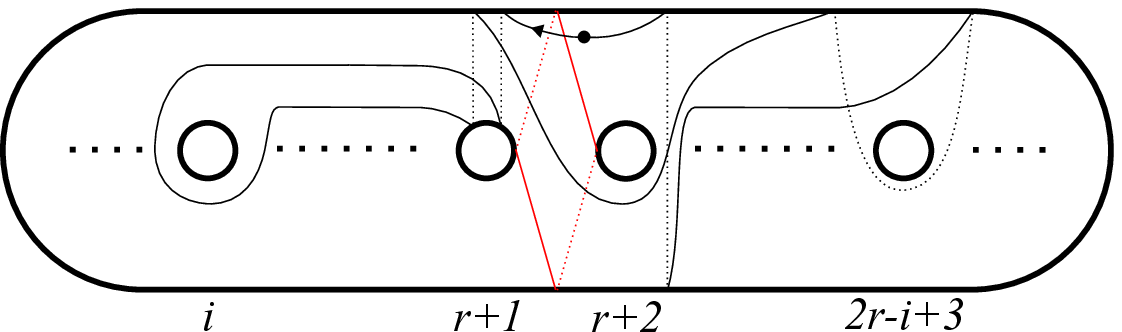}}
\caption{}\label{(6)4}
\end{figure}

\begin{figure}
\subfigure[The loop $R_i^{m_i}$ with $m_i=2$, $1\leq i\leq r$.]{\includegraphics{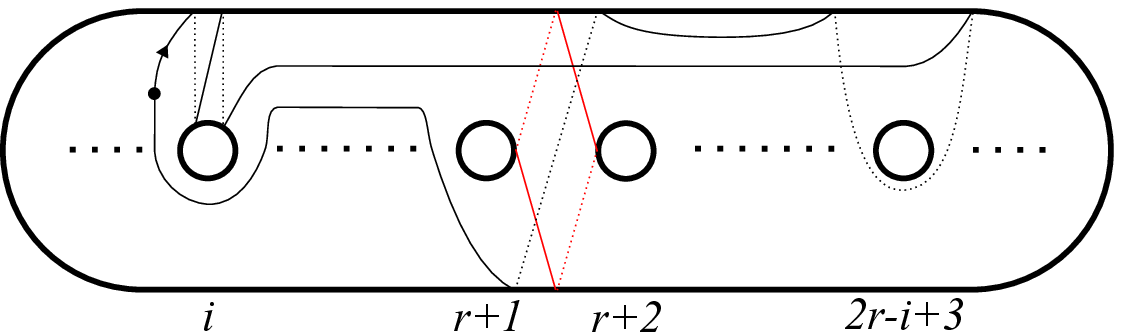}}

\subfigure[The loop $R_{r+i}^{m_{r+i}}$ with $m_i=2$, $1\leq i\leq r$.]{\includegraphics{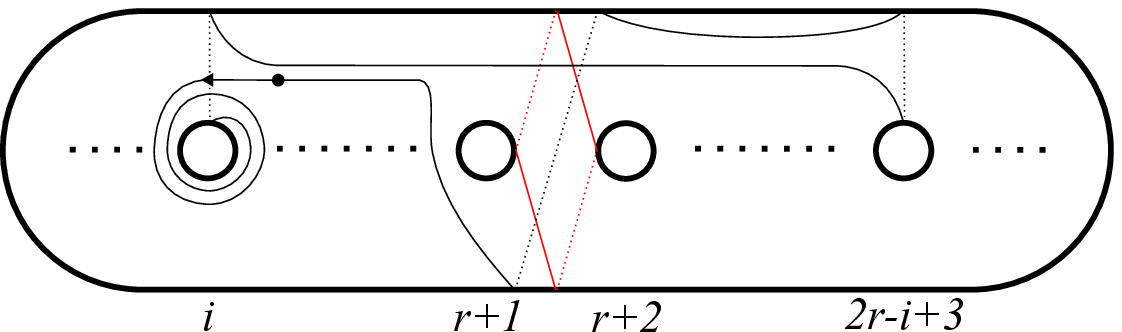}}

\subfigure[The loop $R_{2r+1}^{m_{2r+1}}$ with $m_{2r+1}=2$.]{\includegraphics{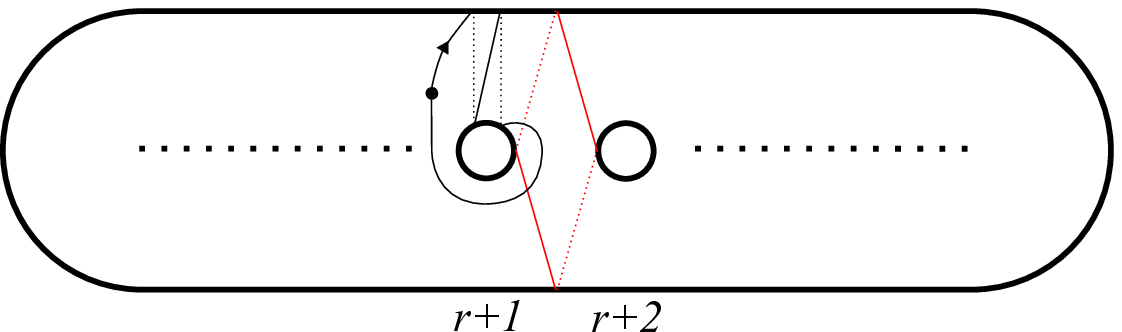}}
\caption{}\label{(6)5}
\end{figure}

\if0
\begin{figure}
\subfigure[The loop $R_{1,j},2\leq j\leq l.$]{\includegraphics{fig.8.1.eps}}

\subfigure[The loop $R_{i,j},2\leq i<j\leq l.$]{\includegraphics{fig.8.2.eps}}

\subfigure[The loop $R_{l+1,j},2\leq j\leq l.$]{\includegraphics{fig.8.3.eps}}
\caption{}\label{(6)1}
\end{figure}

\begin{figure}
\subfigure[The loop $R_1 ^{m_1 }$ with $m_1 =3.$]{\includegraphics{fig.9.1.eps}}

\subfigure[The loop $R_i ^{m_i}$ with $m_i =3, 2\leq i\leq l.$]{\includegraphics{fig.9.2.eps}}

\subfigure[The loop $R_{l+1}^{m_{l+1}}$ with $m_{l+1}=3.$]{\includegraphics{fig.9.3.eps}}
\caption{}\label{(6)2}
\end{figure}
\fi

\begin{proof}[Proof of (6) of Theorem~\ref{1.2}]
Let $n,k\geq0$ be integers with $n+k\geq3$.

At first, we consider the case $n+k$ is even.
We put $n+k=2r$.
Let $A_{i,j}$ and $B_{i,j}$ be simple closed curves on $\Sigma_{n+k+1}$
 as shown in (a) and (b) of Figure~\ref{(6)1}, respectively, where 
$1\leq{}i<j\leq{}r$, and let $C_{i,j}$ be the simple closed curve on
 $\Sigma_{n+k+1}$ as shown in (c), (d) and (e) of Figure~\ref{(6)1}, where 
$1\leq{}i,j\leq{}r$.
Note that each of $A_{i,j}$, $B_{i,j}$ and $C_{i,j}$ intersects $a_{r+1}$
 transversely at only one point.
Loops $A_{i,j}$, $B_{i,j}$ and $C_{i,j}$ can be described in
 $\pi_1(\Sigma_{n+k+1})$, up to conjugation, as follows
\begin{itemize}
 \item $A_{i,j}=a_ia_j^{-1}a_{2r-i+2}a_{2r-j+2}^{-1}(c_{r+1}^{-1}b_{r+1}^{-1})$,
       where $1\leq{}i<j\leq{}r$,
 \item $B_{i,j}=b_ib_jb_i^{-1}a_{2r-j+2}b_{2r-j+2}a_{2r-j+2}^{-1}(b_{r+1}^{-1}c_r)$,
       where $1\leq{}i<j\leq{}r$,
 \item $C_{i,j}=a_ib_j^{-1}a_i^{-1}a_{2r-j+2}b_{2r-j+2}^{-1}a_{2r-j+2}^{-1}(a_{r+1}b_{r+1}^{-1})$,
       where $1\leq{}i,j\leq{}r$ and $i\neq{}j$,
 \item $C_{i,i}=b_i^{-1}a_ib_ia_i^{-1}(b_{r+1}^{-1})$, 
       where $1\leq{}i\leq{}r$.
\end{itemize}
Let $V_6$ be the following:
$$
V_6=W(\prod_{1\leq{}i<j\leq{}r}W^{t_{A_{i,j}}})(\prod_{1\leq{}i<j\leq{}r}W^{t_{B_{i,j}}})(\prod_{1\leq{}i,j\leq{}r}W^{t_{C_{i,j}}}).
$$
Note that we have relations $a_{r+1}=1$, $b_{r+1}=1$, $c_r=1$ and
 $c_{r+1}=1$ in $\pi_1(X_W)$.
In addition, we have the relation
 $a_{2r-j+2}b_{2r-j+2}a_{2r-j+2}^{-1}=b_j^{-1}$ in $\pi_1(X_W)$ (see
 the presentation of $\pi_1(X_W)$ in the proof of
 Proposition~\ref{3.1}).
Then, from Proposition~\ref{2.2}, the fundamental group $\pi_1(X_{V_6})$
 can be presented with generators $a_1,b_1,\dots,a_r,b_r$ and with
 relations
\begin{itemize}
 \item $a_ia_j^{-1}a_i^{-1}a_j$, where $1\leq{}i<j\leq{}r$,
 \item $b_ib_jb_i^{-1}b_j^{-1}$, where $1\leq{}i<j\leq{}r$,
 \item $a_ib_j^{-1}a_i^{-1}b_j$, where $1\leq{}i,j\leq{}r$ and $i\neq{}j$,
 \item $b_i^{-1}a_ib_ia_i^{-1}$, where $1\leq{}i\leq{}r$.
\end{itemize}
Namely, $\pi_1(X_{V_6})$ is isomorphic to $\Z^{2r}$.
We next consider the simple closed curve $R_i^{m_i}$ on $\Sigma_{n+k+1}$
 as shown in Figure~\ref{(6)2}, where $1\leq{}i\leq2r$ and $m_i\geq2$.
Note that $R_i^{m_i}$ intersects $a_{r+1}$ transversely at only one
 point.
Loops $R_i^{m_i}$ can be described in $\pi_1(\Sigma_{n+k+1})$, up to
 conjugation, as follows
\begin{itemize}
 \item $R_i^{m_i}=a_i^{m_i}(a_{2r-i+2}b_{2r-i+2}^{-1}a_{2r-i+2}^{-1}a_{r+1}b_{r+1}^{-1}b_i^{-1})$,
       where $1\leq{}i\leq{}r$,
 \item $R_{r+i}^{m_{r+i}}=b_i^{m_{r+i}}(a_i^{-1}a_{2r-i+2}^{-1}a_{r+1}b_{r+1}^{-1})$,
       where $1\leq{}i\leq{}r$.
\end{itemize}
Let $V_7$ be the following:
$$
V_7=V_6(\prod_{1\leq{}i\leq{}k}W^{t_{R_i^{m_i}}}).
$$
Then, from Proposition~\ref{2.2}, the fundamental group $\pi_1(X_{V_7})$
 is isomorphic to $\Z^n\oplus\Z_{m_1}\oplus\cdots\oplus\Z_{m_k}$.
Therefore, if $n+k$ is even, we have 
$g(\Z^n\oplus\Z_{m_1}\oplus\cdots\oplus\Z_{m_k})\leq n+k+1$.

Next, we consider the case $n+k$ is odd.
We put $n+k=2r+1$.
Let $A_{i,j}$ and $B_{i,j}$ be simple closed curves on $\Sigma_{n+k+1}$
 as shown in (a) and (b) of Figure~\ref{(6)3}, respectively, where 
$1\leq{}i<j\leq{}r$, and let $C_{i,j}$ be the simple closed curve on
 $\Sigma_{n+k+1}$ as shown in (c), (d) and (e) of Figure~\ref{(6)3}, where 
$1\leq{}i,j\leq{}r$.
In addition, let $A_{i,r+1}$ and $C_{r+1,i}$ be simple closed curves on
 $\Sigma_{n+k+1}$ as shown in (a) and (b) of Figure~\ref{(6)4}, where 
$1\leq{}i\leq{}r$.
Note that each of $A_{i,j}$, $B_{i,j}$ and $C_{i,j}$ intersects $B_{2r+2}$
 transversely at only one point.
Loops $A_{i,j}$, $B_{i,j}$ and $C_{i,j}$ can be described in
 $\pi_1(\Sigma_{n+k+1})$, up to conjugation, as follows
\begin{itemize}
 \item $A_{i,j}=a_ia_j^{-1}a_{2r-i+3}a_{2r-j+3}^{-1}(c_{r+1}^{-1}b_{r+1}^{-1})$,
       where $1\leq{}i<j\leq{}r$,
 \item $A_{i,r+1}=a_ia_{r+1}^{-1}(b_{r+2})a_{2r-i+3}(c_{r+2})a_{r+1}$,
       where $1\leq{}i\leq{}r$,
 \item $B_{i,j}=b_ib_jb_i^{-1}(b_{r+2})a_{2r-j+3}b_{2r-j+3}a_{2r-j+3}^{-1}(b_{r+2}^{-1}b_{r+1}c_{r+1})$,
       where $1\leq{}i<j\leq{}r$,
 \item $C_{i,j}=a_ib_ja_i^{-1}(b_{r+2})a_{2r-j+3}b_{2r-j+3}a_{2r-j+3}^{-1}(b_{r+2}^{-1}b_{r+1}c_{r+1})$,
       where $1\leq{}i,j\leq{}r$ and $i\neq{}j$,
 \item $C_{i,i}=b_i^{-1}a_ib_ia_i^{-1}(b_{r+1}^{-1})$, 
       where $1\leq{}i\leq{}r$,
 \item $C_{r+1,i}=a_{r+1}b_ia_{r+1}^{-1}(b_{r+2})a_{2r-i+3}b_{2r-i+3}a_{2r-i+3}^{-1}(c_{r+2})$,
       where $1\leq{}i\leq{}r$.
\end{itemize}
Let $V_8$ be the following:
$$
V_8=WW^{t_{b_{r+1}}}(\prod_{1\leq{}i<j\leq{}r+1}W^{t_{A_{i,j}}})(\prod_{1\leq{}i<j\leq{}r}W^{t_{B_{i,j}}})(\prod_{1\leq{}i\leq{}r+1,1\leq{}j\leq{}r}W^{t_{C_{i,j}}}).
$$
Since $b_{r+1}$ intersects $B_{2r+2}$ transversely at only one point, we
 have the relation $b_{r+1}=1$ in $\pi_1(X_{WW^{t_{b_{r+1}}}})$ from
 Proposition~\ref{2.2}.
Hence we have relations $b_{r+2}=1$ and $c_{r+2}=1$ in
 $\pi_1(X_{WW^{t_{b_{r+1}}}})$.
Then, from Proposition~\ref{2.2} and the presentation of $\pi_1(X_W)$ in
 the proof of Proposition~\ref{3.1}, the fundamental group $\pi_1(X_{V_8})$
 is isomorphic to an abelian generated by $a_1,b_1,\dots,a_r,b_r$ and
 $a_{r+1}$.
We next consider the simple closed curve $R_i^{m_i}$ on $\Sigma_{n+k+1}$ as
 shown in Figure~\ref{(6)5}, where $1\leq{}i\leq2r+1$ and $m_i\geq2$.
Note that $R_i^{m_i}$ intersects $B_{2r+2}$ transversely at only one
 point.
Loops $R_i^{m_i}$ can be described in $\pi_1(\Sigma_{n+k+1})$, up to
 conjugation, as follows
\begin{itemize}
 \item $R_i^{m_i}=a_i^{m_i}(a_{2r-i+3}b_{2r-i+3}^{-1}a_{2r-i+3}^{-1}c_{r+1}^{-1}b_{r+1}^{-1}b_i^{-1})$,
       where $1\leq{}i\leq{}r$,
 \item $R_{r+i}^{m_{r+i}}=b_i^{m_{r+i}}(a_i^{-1}a_{2r-i+3}^{-1}c_{r+1}^{-1}b_{r+1}^{-1})$,
       where $1\leq{}i\leq{}r$,
 \item $R_{2r+1}^{m_{2r+1}}=a_{r+1}^{m_{2r+1}}(b_{r+1}^{-1})$.
\end{itemize}
Let $V_9$ be the following:
$$
V_9=V_8(\prod_{1\leq{}i\leq{}k}W^{t_{R_i^{m_i}}}).
$$
Then, from Proposition~\ref{2.2}, the fundamental group $\pi_1(X_{V_9})$
 is isomorphic to $\Z^n\oplus\Z_{m_1}\oplus\cdots\oplus\Z_{m_k}$.
Therefore, if $n+k$ is odd, we have 
$g(\Z^n\oplus\Z_{m_1}\oplus\cdots\oplus\Z_{m_k})\leq n+k+1$.

Moreover, it is immediately follows from Theorem~\ref{2.3}~(2) or (5)
(cf. \cite{k2}) that 
$g(\Z^n\oplus\Z_{m_1}\oplus\cdots\oplus\Z_{m_k})\geq\frac{n+k+1}{2}$. 
Thus, the proof of (6) of Theorem~\ref{1.2} is completed.

\if0
Let $l\geq 2$ be an integer.

First, we consider simple closed curves $R_{i,j}$ on $\Sigma _{2l} $ as
 shown in Figure \ref{(6)1}, where $1\leq i<j\leq l$ or $2\leq j\leq l$ if $i=l+1.$ 
Note that $R_{1,j}$ and $R_{l+1,j}$ intersect $B_{2j}$ transversely at
 only one point, for $2\leq j\leq l,$ and that $R_{i,j}$ intersects
 $B_{2l}$ transversely at only one point, for $2\leq i<j\leq l.$ 
Loops $R_{i,j} $ can be described in $\pi _1
 (\Sigma _{2l} ),$ up to conjugation, as follows

\begin{itemize}
 \item $R_{1,j}=a_{2l-j+1}a_{2l}(b_{j+1}\cdots b_{2l-1})^{-1}a_j (b_j
       \cdots b_{2l-1})a_{2l}^{-1},$ where $2\leq j\leq l,$
 \item $R_{i,j}=a_i a_j a_i ^{-1}(b_i \cdots b_j )a_j ^{-1}(b_i \cdots
       b_l )^{-1},$ where $2\leq i<j\leq l,$
 \item $R_{l+1,j}=a_j \6 (\7 \cdots b_{j-1})a_j ^{-1}(b_{i+1}\cdots
       b_{2l-1})b_{2l},$ where $2\leq j\leq l.$
\end{itemize}
 
Let $V_6 $ be the following:
$$V_6 =W(\prod _{2\leq i\leq 2l-1}W^{t_{b_{i}}})(\prod _{1\leq i<j\leq l}W^{t_{R_{i,j}}})(\prod
 _{2\leq j\leq l}W^{t_{R_{l+1,j}}}).$$

Then, from Proposition 2.2 and (2) of Proposition 3.1, 
 the fundamental group $\pi _1 (X_{V_6 })$ 
 is isomorphic to $\Z ^{l+1}.$

Next, we consider simple closed curves $R_i ^{m_i }$ on $\Sigma _{2l}$ 
 as shown in Figure \ref{(6)2}, where $1\leq i\leq l+1$ and $m_i \geq 0.$ 
Note that $R_1 ^{m_1 }$ and $R_{l+1} ^{m_{l+1} }$ intersect $B_{2l}$ transversely at
 only one point, and that $R_i ^{m_i }$ intersects
 $B_{2i}$ transversely at only one point, for $2\leq i\leq l.$ 
Loops $R_i ^{m_i } $ can be described in $\pi _1
 (\Sigma _{2l} ),$ up to conjugation, as follows

\begin{itemize}
 \item $R_1 ^{m_1 }=\1 ^{m_1 }(b_{l+1})(\6 (\7 \cdots b_{2l-1})b_{2l})^{-1},$
 \item $R_i ^{m_i }=a_i ^{m_i }(b_i ^{-1}),$ where $2\leq i\leq l,$
 \item $R_{l+1} ^{m_{l+1}}=\6 ^{m_{l+1}-1}(a_{2l}\1
       )^{-1}b_{2l}^{-1}(b_{l+1}\cdots b_{2l-1})^{-1}.$
\end{itemize}

Let $n$ and $k$ be non negative integers with $n+k-1=l,$ and let $m_1
 ,\ldots ,m_k \geq 2$ be integers.
Then we consider that 
$$V_7 =V_6 (\prod _{1\leq i\leq k}W^{t_{R_i ^{m_i }}}).$$

Then, from Proposition 2.2, 
 the fundamental group $\pi _1 (X_{V_7 })$ 
 is isomorphic to $\Z ^n \oplus \Z _{m_1 }\oplus \cdots
 \oplus \Z _{m_k }.$
(When $k=0,$ the group $\pi _1 (X_{V_7 })$ is isomorphic to the abelian
 of rank $n,$ and when $n=0,$ the group $\pi _1 (X_{V_7 })$ is
 isomorphic to a finite abelian.)
Therefore, for non negative integers $n$ and $k$ with $n+k\geq 3$ and
 integers $m_1 ,\ldots ,m_k \geq 2,$
we have $g(\Z ^n \oplus \Z _{m_1 }\oplus \cdots
 \oplus \Z _{m_k })\leq 2(n+k-1).$ 
Moreover, it is immediately follows from Theorem 2.3 (2) or (5)
 \cite{k2} that 
$g(\Z ^n \oplus \Z _{m_1 }\oplus \cdots \oplus \Z _{m_k })\geq
 \frac{n+k+1}{2}.$ 

Thus, the proof of (6) of Theorem 1.2 is completed.
\fi
\end{proof}

\appendix
\section{}
Theorem 5.1 of \cite{k2} stated followings.
\begin{itemize}
 \item $g(\Gamma)=0$ if and only if $\Gamma$ is the trivial group. 
 \item $g(\Gamma)=1$ if and only if $\Gamma$ is isomorphic to
       $\Z\oplus\Z$.
 \item $g(\Gamma)=2$ if $\Gamma$ is isomorphic to $\Z$, $\Z\oplus\Z_n$,
       $\Z_n\oplus\Z_m$ or $\Z_n$, where $n,m\geq2$.
\end{itemize}
However, we have that $g(\Z)=g(\Z\oplus\Z_n)=1$.
In fact, $S^3\times{}S^1$, the product of Hopf fibration with $S^1$, is
the genus-$1$ Lefschetz fibration without singular fibers whose
fundamental group is isomorphic to $\Z$.
In addition, $L(n,1)\times{}S^1$ is the genus-$1$ Lefschetz fibration
without singular fibers whose fundamental group is isomorphic to 
$\Z\oplus\Z_n$, where $L(n,1)$ is the lens space of the type $(n,1)$.

For integers $n$ and $m$, let
$\varphi_{n,m}:\partial{}D^2\times{}T^2\to\partial{}D^2{}\times{}T^2$ be
$\varphi_{n,m}(e^{\alpha{}i},e^{\beta{}i},e^{\gamma{}i})=(e^{\alpha{}i},e^{(\beta+n\alpha)i},e^{(\gamma+m\alpha)i})$,
and let $X_{n,m}$ be
$X_{n,m}=D^2\times{}T^2\cup_{\varphi_{n,m}}D^2\times{}T^2$, where $D^2$
is a disk and $T^2$ is a torus.
Then $X_{n,m}$ is a $T^2$-bundle over $S^2$.
Conversely any $T^2$-bundle over $S^2$ is isomorphic to some $X_{n,m}$
as a bundle.
For example, $X_{0,0}=S^2\times{}T^2$, $X_{1,0}=S^3\times{}S^1$ and
$X_{n,0}=L(n,1)\times{}S^1$.
Let $d$ be the greatest common divisor of $n$ with $m$, then we have 
$\pi_1(X_{n,m})$ is isomorphic to $\Z\oplus\Z_d$, where we suppose that
the greatest common divisor of $0$ with $0$ is $0$.

Therefore the fundamental group of a genus-$1$ Lefschetz fibration
without singular fibers is $\Z\oplus\Z$, $\Z$ or $\Z\oplus\Z_n$ for some
$n\geq2$.
On the other hand, a genus-$1$ Lefschetz fibration with singular fibers
is an elliptic surface $E(n)$ for some $n\geq1$ (see \cite{k} and
\cite{mo}), and $E(n)$ is simply connected.

We summarize:

\begin{thm}\label{5.1}
\begin{enumerate}
 \item $g(\Gamma)=0$ if and only if $\Gamma$ is the trivial group. 
 \item $g(\Gamma)=1$ if and only if $\Gamma$ is isomorphic to 
       $\Z\oplus\Z$, $\Z$ or $\Z\oplus\Z_n$ for $n\geq2$.
 \item $g(\Gamma)=2$ if $\Gamma$ is isomorphic to 
       $\Z_n$ or $\Z_n\oplus\Z_m$ for $n,m\geq2$.
\end{enumerate}
\end{thm}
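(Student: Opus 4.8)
The plan is to prove the three items separately, since each rests on a different structural feature of low-genus Lefschetz fibrations, and to feed the classification obtained in part~(2) directly into the lower bound of part~(3).

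For part~(1), I would first note that a genus-$0$ Lefschetz fibration can carry no singular fibers: every simple closed curve on $S^2$ bounds a disk, so any vanishing cycle would produce an embedded sphere of self-intersection $-1$ inside a singular fiber, violating relative minimality. Hence such a fibration is an $S^2$-bundle over $S^2$, which is simply connected, so $\pi_1(X)=1$. Conversely the trivial bundle $S^2\times S^2\to S^2$ is a genus-$0$ Lefschetz fibration (with no singular fibers and no $(-1)$-sphere in any fiber) whose total space is simply connected, so $g(\Gamma)=0$ holds exactly when $\Gamma$ is trivial.

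For part~(2), the realization is already available from the computation preceding the theorem: the $T^2$-bundles $X_{n,m}$ are genus-$1$ Lefschetz fibrations without singular fibers with $\pi_1(X_{n,m})\cong\Z\oplus\Z_{\gcd(n,m)}$, and as $\gcd(n,m)$ runs over $0,1$ and $d\geq2$ this realizes $\Z\oplus\Z$, $\Z$ and $\Z\oplus\Z_d$. Each of these is nontrivial, so part~(1) forces its genus to be at least $1$, hence exactly $1$. For the converse, suppose $g(\Gamma)=1$ and pick a genus-$1$ Lefschetz fibration $f\colon X\to S^2$ with $\pi_1(X)\cong\Gamma$. If $f$ has singular fibers then $X$ is an elliptic surface $E(n)$, which is simply connected, forcing $\Gamma=1$ and contradicting $g(\Gamma)=1$; so $f$ has no singular fibers, $X$ is a $T^2$-bundle over $S^2$, that is some $X_{n,m}$, and $\Gamma\cong\Z\oplus\Z_{\gcd(n,m)}$. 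This pins the list to exactly $\Z\oplus\Z$, $\Z$, and $\Z\oplus\Z_n$.

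For part~(3) the lower bound is immediate from the previous two parts: $\Z_n$ and $\Z_n\oplus\Z_m$ are finite and nontrivial, whereas every group appearing in parts~(1) and (2) is either trivial or infinite, so a finite nontrivial group can have genus neither $0$ nor $1$; hence $g\geq2$. For the upper bound, $\Z_n=\la g_1\mid g_1^n\ra$ has a single generator and $\ell=1$, so Theorem~\ref{1.1} already furnishes a genus-$2$ Lefschetz fibration with fundamental group $\Z_n$, giving $g(\Z_n)\leq2$. The remaining, and hardest, point is $g(\Z_n\oplus\Z_m)\leq2$: here the standard presentation $\la g_1,g_2\mid g_1^n,g_2^m,[g_1,g_2]\ra$ has a commutator relation of syllable length $4$, so Theorem~\ref{1.1} only yields genus $2\cdot2+4-1=7$ and is far from sharp. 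I would therefore invoke Korkmaz's explicit genus-$2$ Lefschetz fibration \cite{k2} realizing $\Z_n\oplus\Z_m$. Reproducing that construction within the present framework---killing $b_1,b_2$ on $\Sigma_2$ to free up $a_1,a_2$ and then imposing $a_1^n=a_2^m=1$ and $[a_1,a_2]=1$ through suitably twisted vanishing cycles---is the main obstacle, precisely because it must be carried out with \emph{simple} closed curves at the minimal possible genus.
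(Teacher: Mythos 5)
Your proposal is correct and follows essentially the same route as the paper: part (1) and the $\Z_n\oplus\Z_m$ upper bound in part (3) are inherited from Korkmaz \cite{k2} (with the lower bound $g\geq2$ in (3) coming, as you say, from the corrected genus-$0$ and genus-$1$ lists), and part (2) rests on exactly the paper's classification of genus-$1$ Lefschetz fibrations into the simply connected elliptic surfaces $E(n)$ and the torus bundles $X_{n,m}$ with $\pi_1\cong\Z\oplus\Z_{\gcd(n,m)}$. The only addition is your direct relative-minimality argument for part (1), which the paper leaves to the citation.
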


\section*{Acknowledgement}

The author would like to express thanks to Susumu Hirose and Naoyuki
Monden for their valuable suggestions and useful comments.

\end{document}